\newtheorem{theorem}{Theorem}[section]
\newtheorem{lemma}[theorem]{Lemma}
\newtheorem{corollary}[theorem]{Corollary}
\theoremstyle{definition}
\theoremstyle{proposition}
\theoremstyle{remark}
\newtheorem{remark}[theorem]{Remark}
\numberwithin{equation}{section}
\begin{document}

\title{Pointwise upper bound for the fundamental solution of fractional Fokker-Planck equation}

\author{Haina Li}
\address{School of Mathematics and Statistics, Beijing Institute of Technology, Beijing 100081, China.}
\email{lihaina2000@163.com}


\author{Yiran Xu}
\address{Fudan University, 220 Handan Road, Yangpu, Shanghai, 200433, China.}
\email{yrxu20@fudan.edu.cn}

\subjclass[2020]{35A08, 35Q84}



\keywords{Fokker-Planck equation, fundamental solution, pointwise estimate}

\begin{abstract}
	In this paper,  we investigate the fundamental solution of the fractional Fokker-Planck equation. Utilizing the Littlewood-Paley decomposition technology, we present a concise proof of the pointwise estimate for the fundamental solution.
\end{abstract}
\maketitle
\section{Introduction}
We now formally define the class of equations under investigation in this article. 
Specifically, we consider the following linear kinetic fractional Fokker-Planck equation, formulated in the distributional sense:
\begin{equation}\label{aim equation}
	\partial_t f+v\cdot \nabla_x f+|D_v|^{2s} f=0,\quad 0<s<1,
\end{equation}
where $f=f(t,x,v)$ represents the unknown function  dependent on time $t$, position $x$ and velocity $v$, defined on the domain $\mathbb{R}_+\times\mathbb{R}\times\mathbb{R}$. The initial condition is given by $f(t=0)=f_0$.

Before introducing the Fokker-Planck equation, we first review the properties of the heat equation, which serves as a simpler case compared to the Fokker-Planck equation. It is well-established that the classical Laplacian heat equation is given by
\begin{equation*}
	\partial_t u-\Delta u=0,
\end{equation*}	
and its fundamental solution, which originates from a point source (represented by the Dirac delta function), is the Gaussian heat kernal:
\begin{equation*}
	\mathcal{K}(t,x,v)=(4\pi t)^{-\frac{d}{2}} \exp(-\frac{|x-v|^2}{4t}),
\end{equation*}	
where $x,v\in \mathbb{R}^d$ denote spatial points, and $t>0$ represents the time variable. 
Furthermore, for the fractional heat equation, 
\begin{equation*}\label{heat equation}
	\partial_t u + (-\Delta)^s u=0, \quad(t,x)\in \mathbb{R}_+\times\mathbb{R},
\end{equation*}	
in the early work\cite{G. Pólya}, G. Pólya 
first studied the asymptotic behavior of the fundamental solution by examining the term $|x|^{1+2s}K(1,x)$ as $x\rightarrow+\infty$:
\begin{equation*}
	\mathcal{K}(1,x)\lesssim \frac{1}{(1+|x|)^{1+2s}}.
\end{equation*}	
Subsequently, D. G. Aronson \cite{D. G. Aronson} established global bounds for the fundamental solution of uniformly parabolic equation. In 1987, E. B. Davies\cite{E. B. Davies} derived Gaussian upper bounds for the heat kernels through the use of logarithmic Sobolev inequalities. Since these results pertain to classical parabolic operators, A. Pascucci and S. Polidoro \cite{A. Pascucci}  further extended the analysis by proving an upper bound for the fundamental solution that is independent of the regularity of the coefficients. We also refer the reader to the works of \cite{R. M. Blumenthal}, \cite{E. B. Fabes}, \cite{J. Nash} for further contributions on this topic.

We now focus on the case $s=1$ of equation \eqref{aim equation}, specifically $\partial_t u+v\cdot\nabla_xu-\Delta_v u=0$.
In 1934, Kolmogorov\cite{A. Kolmogorov} provided a fundamental solution to this equation and demonstrated that the fundamental solution satisfies Gaussian bounds.
We are also interested in the fractional counterpart of the aforementioned equation. It is well known that  the fractional equation serves as a prototype for a
kinetic partial differential equation, bearing similarities to the renowned Boltzmann and Landau equations.
Recently, in \cite{A. Loher}, A. Loher  established a  polynomial upper bound for the fundamental solution of  kinetic integro-differential equations, with the fractional Kolmogorov equation $\partial_t u+v\cdot\nabla_x u=(-\Delta_v)^s u+h$ serving as a prototypical example of constant coefficient equations. Additionally, P. Auscher, C. Imbert and L. Niebel \cite{P. Auscher} investigated the construction of weak solutions for kinetic equations of Kolmogorov-Fokker-Planck type. Other significant contributions related to the Kolmogorov equation can be found in the works of \cite{B. Farkas},  \cite{C. Imbert}, \cite{L. P. Kuptsov}, \cite{E. Lanconell}, \cite{A. Lanconelli}, \cite{S. Polidoro}.

The analysis of properties of fundamental solutions to Cauchy problems like \eqref{aim equation} have far-reaching applications, including the study of well-posedness and regularity of solutions.
 In 1951, M. Weber \cite{M. Weber} generalized Kolmogorov's results to a broader class of equations and provided a uniqueness theorem for the fundamental solution. In \cite{Zimo Hao}, the authors  established the global well-posedness for the following Fokker-Planck equation with small initial value:
\begin{equation*}
	\partial_t u=(\Delta_v^{\frac{\alpha}{2}}-v\cdot\nabla_x)u -div_v((b*u)u),~u(0)=u_0.
\end{equation*}	
With regard to regularity, building on Kolmogorov's work, Hörmander \cite{L. Hörmander} examined the regularity properties of a much broader class of equations. L. Niebel and R. Zacher \cite{L. Niebel} introduced the notion of kinetic maximal $L^2$-regularity with temporal weights and demonstrated that the fractional Kolmogorov equation satisfies this regularity property. Later, in \cite{L. Niebel 0},  L. Niebel also
extended this result by proving that the Kolmogorov equation admits kinetic maximal $L^p_\mu$-regularity, provided certain conditions on the density, as well as on the parameters $p$ and $\mu$, are satisfied.

Motivated by the aforementioned works, the primary objective of this paper is to establish the pointwise upper bound for the fundamental solution of \eqref{aim equation}, 
which is expected to play a crucial role in future investigations of well-posedness. Our main result of this paper is stated as follows:
\begin{theorem}\label{estimate K}
	Let $\forall x,v,\xi,\eta \in \mathbb{R}$, then for any $b_1,b_2 \in \mathbb{N},~s\in(0,1)$, there exists $\varepsilon>0$ such that the fundamental solution $\mathcal{K}(1,x,v)$ of \eqref{aim equation} satisfies the following inequality:
	\begin{equation*}
		\left|\partial_x^{b_1}\partial_v^{b_2}\mathcal{K}(1,x,v)\right|
		\lesssim\frac{1}{\langle x,x+v \rangle ^{2+2s-2\varepsilon}\langle x\rangle^{\varepsilon+b_1}\langle x+v\rangle^{\varepsilon+b_2}},
	\end{equation*}
	where $\varepsilon$ arbitrarily small.
\end{theorem}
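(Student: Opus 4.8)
The plan is to pass to the Fourier side, where $\mathcal K(1,\cdot)$ has an explicit symbol, and to read off the pointwise bound by a Littlewood–Paley decomposition of that symbol. Let $\xi,\eta$ be the variables dual to $x,v$. Fourier transforming \eqref{aim equation} turns $v\cdot\nabla_x$ into the drift $-\xi\partial_\eta$ and $|D_v|^{2s}$ into multiplication by $|\eta|^{2s}$, so $\widehat f$ solves $\partial_t\widehat f-\xi\partial_\eta\widehat f+|\eta|^{2s}\widehat f=0$; integrating along the characteristics $\dot\eta=-\xi$ with data $\widehat f(0)\equiv 1$ gives
\[
\widehat{\mathcal K}(1,\xi,\eta)=\exp\!\Big(-\int_0^1|\eta+\tau\xi|^{2s}\,d\tau\Big),\qquad
\partial_x^{b_1}\partial_v^{b_2}\mathcal K(1,x,v)=\frac1{(2\pi)^2}\int_{\mathbb R^2}(i\xi)^{b_1}(i\eta)^{b_2}e^{i(x\xi+v\eta)}\widehat{\mathcal K}(1,\xi,\eta)\,d\xi\,d\eta .
\]
Write $g(\xi,\eta)=\int_0^1|\eta+\tau\xi|^{2s}\,d\tau=\tfrac1{(2s+1)\xi}\big(\mathrm{sgn}(\eta+\xi)|\eta+\xi|^{2s+1}-\mathrm{sgn}(\eta)|\eta|^{2s+1}\big)$. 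Three features of $e^{-g}$ control everything: (i) $g\ge c\,(|\xi|+|\eta|)^{2s}$, so the symbol decays faster than any polynomial; (ii) $e^{-g}$ is $C^\infty$ off the two lines $\{\eta=0\}$ and $\{\eta+\xi=0\}$ — the loci where an endpoint $\tau=0$ or $\tau=1$ of the integral defining $g$ meets the zero of $\eta+\tau\xi$ — and across each of them it has only a conormal singularity of the form $(\text{smooth})+(\text{smooth})\,|\ell|^{2s+1}$ with $\ell$ the transverse linear form; (iii) near the origin $g$ is homogeneous of degree $2s$. By standard Fourier heuristics the two conormal lines yield polynomial decay in the kinetic directions the statement records as $\langle x+v\rangle$ and $\langle x\rangle$, whereas the homogeneous singularity at the origin yields the isotropic factor $\langle x,x+v\rangle^{-(2+2s)}\simeq\langle(x,v)\rangle^{-(2+2s)}$; the derivatives only contribute the smooth weight $(i\xi)^{b_1}(i\eta)^{b_2}$, which vanishes on the frequency axes and therefore never worsens — in the favourable cases improves — these estimates, producing the exponents $\varepsilon+b_1,\varepsilon+b_2$.

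Next I would decompose $\widehat{\mathcal K}(1,\cdot)=(\text{smooth, rapidly decreasing})+\sum_{j\ge0}\chi^{(1)}_j+\sum_{k\ge0}\chi^{(2)}_k+\sum_{j,k\ge0}\chi^{(0)}_{j,k}$: the first term has Schwartz transform and is negligible; $\chi^{(1)}_j$ is the dyadic piece, at distance $\sim 2^{-j}$, of the singular part of $e^{-g}$ along $\{\eta+\xi=0\}$ away from the origin, $\chi^{(2)}_k$ the analogous piece along $\{\eta=0\}$, and $\chi^{(0)}_{j,k}$ the dyadic pieces, in the annulus $|(\xi,\eta)|\sim 2^{-\max(j,k)}$, of the singular part $-g+\tfrac12 g^2-\cdots$ of $e^{-g}$ near the origin. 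On the support of each piece the explicit formula for $g$ controls all derivatives of $(\text{weight})\times e^{-g}$: a derivative falling across a conormal line costs a factor $\sim 2^{j}$ or $\sim 2^{k}$ only up to the order $2s+1$; outside the range $j,k\ge0$ the exponential contributes a genuine gain $e^{-c(2^{-j}+2^{-k})^{2s}}$, making the complementary range harmless; and the weight contributes its vanishing on the axes. Rescaling each piece to unit frequency scale and then integrating by parts — in the frequency conjugate to $x$ for the $\langle x\rangle$–decay, and in the one conjugate to $x+v$ for the $\langle x+v\rangle$–decay — gives, for every $N$,
\[
\big|\mathcal F^{-1}\!\big[(i\xi)^{b_1}(i\eta)^{b_2}\chi\big](x,v)\big|\ \lesssim_N\ 2^{-j(2s+2+b_1)}\,2^{-k(2s+2+b_2)}\,\big\langle 2^{-j}x\big\rangle^{-N}\big\langle 2^{-k}(x+v)\big\rangle^{-N}
\]
for the generic (corner) pieces, with analogous, at least as good, estimates for $\chi^{(1)}_j$ and $\chi^{(2)}_k$.

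Finally one sums in $j$ and $k$. Contributions with $j$ or $k$ bounded give the rapidly decaying part of $\mathcal K$ and sit well below the bound; the decisive part is the corner $j,k\to\infty$, where the two conormal singularities and the homogeneous singularity at the origin must be handled at once. There the geometric series in $j$ (resp. $k$) converges on the overlap shells $\{|2^{-j}x|\lesssim1\}$ (resp. $\{|2^{-k}(x+v)|\lesssim1\}$) only after one borrows an arbitrarily small amount of decay from the factors $\langle 2^{-j}x\rangle^{-N}$, $\langle 2^{-k}(x+v)\rangle^{-N}$; this borrowing is exactly what turns the clean exponent $2+2s$ into $2+2s-2\varepsilon$ together with the extra $\langle x\rangle^{\varepsilon}\langle x+v\rangle^{\varepsilon}$, while $\sum_{j,k}2^{-j(2s+2+b_1)}2^{-k(2s+2+b_2)}\langle2^{-j}x\rangle^{-N}\langle2^{-k}(x+v)\rangle^{-N}\lesssim\langle x\rangle^{-(2s+2+b_1)}\langle x+v\rangle^{-(2s+2+b_2)}$ reassembles the weighted part. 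The main obstacle, and the only genuinely delicate point, is precisely this corner analysis: to keep simultaneous track of the order $2s+1$ of each conormal singularity, the homogeneity degree $2s$ at the origin, and the vanishing orders $b_1,b_2$ of the weight, so that no decay beyond $\varepsilon$ is lost where the three summations overlap; everything away from the frequency origin reduces to a routine non–stationary phase estimate.
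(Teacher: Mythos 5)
Your high-level strategy — read $\widehat{\mathcal K}(1,\xi,\eta)=e^{-g(\xi,\eta)}$ off the characteristics, note the two conormal lines $\{\eta=0\}$, $\{\xi+\eta=0\}$ of order $2s+1$ and the degree-$2s$ homogeneity at the origin, and sum a dyadic decomposition of the symbol — is exactly the philosophy of the paper. The paper makes it concrete by first substituting $\xi\mapsto\xi-\eta$ (equivalently, passing to the dual variables of $(x,x+v)$), which straightens your two conormal lines to the coordinate axes, and then using a genuine rectangular Littlewood--Paley decomposition $\chi(|\xi|/2^{m_1})\chi(|\eta|/2^{m_2})$, $m_1,m_2\in\mathbb Z$, so that ``distance to each line'' and ``frequency magnitude'' are the same thing. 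Their Lemmas~\ref{estimate of M}--\ref{lemma e^M} give a single derivative estimate for the full symbol $e^{-M}$ on each rectangle, with no separation into smooth plus singular parts.

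There is, however, a concrete error in your key claimed estimate. You assert
\[
\big|\mathcal F^{-1}[(i\xi)^{b_1}(i\eta)^{b_2}\chi^{(0)}_{j,k}](x,v)\big|\lesssim_N 2^{-j(2s+2+b_1)}2^{-k(2s+2+b_2)}\langle 2^{-j}x\rangle^{-N}\langle 2^{-k}(x+v)\rangle^{-N},
\]
and observe that summing it gives $\langle x\rangle^{-(2s+2+b_1)}\langle x+v\rangle^{-(2s+2+b_2)}$. That would be a \emph{stronger} bound than Theorem~\ref{estimate K}: for $|x|\sim|x+v|\sim R$ it gives $R^{-2(2+2s)-b_1-b_2}$, whereas the theorem (and the sharp two-sided bound of Hou--Zhang, which the authors cite) gives $R^{-(2+2s)-b_1-b_2}$. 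The problem is a double counting of the $2s$ gain: the symbol near the origin decays like $\max(|\xi|,|\eta|)^{2s}$, not like $|\xi|^{2s+1}|\eta|^{2s+1}$, so the contribution of one dyadic corner rectangle $|\xi|\sim 2^{-j}$, $|\eta|\sim 2^{-k}$ with $b_1=b_2=0$ has $L^1$-size $\sim 2^{-j-k}\,2^{-\min(j,k)\cdot 2s}$, which is far larger than your claimed $2^{-j(2s+2)}2^{-k(2s+2)}$. (The true bound has a genuine non-separable $\min(j,k)$; this is precisely what makes the final exponent $2+2s$ rather than $2(2+2s)$.)

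A second, related gap: you describe $\chi^{(1)}_j,\chi^{(2)}_k$ as pieces of the conormal singularity ``away from the origin,'' and label the corner $\chi^{(0)}_{j,k}$ as the delicate part, treating the one-dimensional pieces as ``analogous, at least as good.'' In fact the Taylor expansion $M(\xi,\eta)\approx\frac{1}{2s+1}|\eta|^{2s}-\frac{|\xi|^{2s}\xi}{(2s+1)\eta}+\cdots$ (for $|\xi|\ll|\eta|$) shows that near the origin the bulk of the mass is carried by the part singular in \emph{only one} variable, not by the doubly-singular corner; your organisation leaves the dominant piece unestimated.

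Finally, the paper confronts a point you pass over in silence: because $e^{-M}$ is only $C^{\lfloor 2s+1\rfloor}$ across the two axes, one cannot integrate by parts freely. The authors integrate by parts in $\eta$ exactly twice when $0<s\le\frac12$ and exactly three times when $\frac12<s<1$ (producing the $v^{-(\boxplus+b_2-a)}$ prefactors in $G_{m_1,m_2}$, $H_{m_1,m_2}$), and then distribute derivatives to the cutoffs rather than to $e^{-M}$. This counting — matching the number of integrations by parts to the order $2s+1$ of the conormal singularity — is where the real care is needed and is what you need to spell out to turn the sketch into a proof.
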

Thanks to the scale invariance property of the fundamental, namely $\mathcal{K}(t,x,v)=\frac{1}{t^{1+\frac{1}{s}}}\mathcal{K}(1,\frac{x}{t^{1+\frac{1}{2s}}},\frac{v}{t^{\frac{1}{2s}}})$, we can infer the following Corollary:
\begin{corollary}
	For any $b_1,b_2 \in \mathbb{N},~s\in(0,1)$, there exists $\varepsilon>0$ such that the fundamental solution $\mathcal{K}(t,x,v)$ of \eqref{aim equation} holds:
	\begin{equation*}
		\left|\partial_x^{b_1}\partial_v^{b_2}\mathcal{K}(t,x,v)\right|
		\lesssim
		\frac{t^{-1-\frac{2+b_1+b_2}{2s}-b_1}}{\langle \frac{x}{t^{1+\frac{1}{2s}}},\frac{x}{t^{1+\frac{1}{2s}}}+\frac{v}{t^\frac{1}{2s}} \rangle ^{2+2s-2\varepsilon}\langle \frac{x}{t^{1+\frac{1}{2s}}}\rangle^{\varepsilon+b_1}\langle \frac{x}{t^{1+\frac{1}{2s}}}+\frac{v}{t^\frac{1}{2s}}\rangle^{\varepsilon+b_2}},
	\end{equation*}
	where $\varepsilon$ arbitrarily small.
\end{corollary}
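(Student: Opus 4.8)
The plan is to argue on the Fourier side. Write $\xi,\eta$ for the variables dual to $x,v$. Fourier transforming \eqref{aim equation} in $(x,v)$ turns the drift $v\cdot\nabla_x$ into a first-order transport operator in $\eta$ (with $\xi$ a parameter) and $|D_v|^{2s}$ into multiplication by $|\eta|^{2s}$, so that $\widehat{\mathcal K}$ solves a scalar linear ODE along the characteristics $\sigma\mapsto\eta+\sigma\xi$; integrating it from $\widehat{\mathcal K}(0,\xi,\eta)\equiv1$ yields
\[
\widehat{\mathcal K}(1,\xi,\eta)=\exp\Bigl(-\int_0^1|\eta+\sigma\xi|^{2s}\,d\sigma\Bigr)=:e^{-\Phi(\xi,\eta)},\qquad \partial_x^{b_1}\partial_v^{b_2}\mathcal K(1,\cdot,\cdot)=\mathcal F^{-1}\bigl[(i\xi)^{b_1}(i\eta)^{b_2}e^{-\Phi}\bigr].
\]
Three elementary facts about $\Phi$ organise the rest. (i) $\Phi$ is homogeneous of degree $2s$. (ii) Substituting $\tau=\eta+\sigma\xi$ gives $\Phi(\xi,\eta)=\tfrac{1}{(2s+1)\xi}\bigl(\psi(\eta+\xi)-\psi(\eta)\bigr)$ with $\psi(t)=|t|^{2s}t\in C^{1,\min(2s,1)}_{\mathrm{loc}}$, i.e. $\Phi$ is (a multiple of) a first divided difference of $\psi$; hence $\Phi\in C^\infty$ off the two lines $\{\eta=0\}$ and $\{\eta+\xi=0\}$, is conormal of order $2s+1$ transverse to each, and, by homogeneity, carries its worst singularity, of order $2s$, at the origin. (iii) Because $\sigma\mapsto|\eta+\sigma\xi|$ is convex and either monotone or changes sign once on $[0,1]$, one has $\Phi(\xi,\eta)\asymp|\eta|^{2s}+|\eta+\xi|^{2s}$; in particular $e^{-\Phi}$ and all of its derivatives are super-polynomially small off a fixed ball.

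I would then insert a Littlewood--Paley partition $1=\sum_{j\ge0}\varphi_j$ of the $(\xi,\eta)$-plane ($\varphi_0$ supported where $|(\xi,\eta)|\lesssim1$, $\varphi_j$ where $|(\xi,\eta)|\sim2^j$) and write $\mathcal K=\sum_{j}\mathcal K_j$ with $\mathcal K_j:=\mathcal F^{-1}\bigl[(i\xi)^{b_1}(i\eta)^{b_2}\varphi_j e^{-\Phi}\bigr]$. For $j\ge1$ one has $\Phi\gtrsim2^{2sj}$ on $\mathrm{supp}\,\varphi_j$, so $e^{-\Phi}$ is super-exponentially small there; estimating $\langle x,x+v\rangle^{M}\mathcal K_j$ for arbitrary $M$ by placing $M$ (integer) derivatives on the symbol and taking its $L^1$ norm --- a step in which the polynomial blow-up of the high derivatives of $\Phi$ near the two lines is harmless, being swamped by the exponential factor --- one gets $\langle x,x+v\rangle^{M}\mathcal K_j\lesssim 2^{C(M)j}e^{-c2^{2sj}}$, so the high-frequency blocks sum to a function decaying faster than any polynomial. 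Hence all of the polynomial decay comes from $\mathcal K_0$, and in it only the non-smoothness of $\varphi_0 e^{-\Phi}$, which sits at the origin and along the two lines, matters, the rest being Schwartz.

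For $\mathcal K_0$ the heuristic is transparent: expanding $e^{-\Phi}=\sum_k(-\Phi)^k/k!$, the dominant term is $-\varphi_0\Phi$, and $(i\xi)^{b_1}(i\eta)^{b_2}\Phi$ is homogeneous of degree $b_1+b_2+2s$, so its inverse transform is homogeneous of degree $-(2+2s+b_1+b_2)$ --- precisely the total order of the right-hand side of the Theorem --- with an angular profile whose limited regularity at the directions tangent to $\{\eta=0\}$, $\{\eta+\xi=0\}$ produces the anisotropic splitting into $\langle x,x+v\rangle^{-(2+2s-2\varepsilon)}$ and the line contributions $\langle x\rangle^{-(\varepsilon+b_1)}$, $\langle x+v\rangle^{-(\varepsilon+b_2)}$, the prefactor $(i\xi)^{b_1}(i\eta)^{b_2}$ supplying the extra orders $b_1,b_2$ in the directions dual to $x$ and $x+v$. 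To make this rigorous I would sub-decompose $\mathrm{supp}\,\varphi_0$ dyadically in the two frequency scales $|\eta|$ and $|\eta+\xi|$ and, on each sub-block, control $\langle x\rangle^{m_1}\langle x+v\rangle^{m_2}\langle x,x+v\rangle^{m_0}|\mathcal K_0(x,v)|$, for suitable real $m_0,m_1,m_2$, by the $L^1$ norm of a correspondingly fractionally differentiated piece of $(i\xi)^{b_1}(i\eta)^{b_2}\varphi_0 e^{-\Phi}$, using $e^{-\Phi}\lesssim e^{-c(|\eta|^{2s}+|\eta+\xi|^{2s})}$ together with the regularity bounds from (ii); the sub-block series converge exactly while the total order of decay extracted stays strictly below the critical value $2+2s+b_1+b_2$ imposed by the homogeneity, and that strict inequality is what forces the (arbitrarily small) loss $\varepsilon$. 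Summing yields the stated inequality, and the Corollary then follows verbatim from it and the scale invariance of $\mathcal K$ recalled above.

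I expect the main obstacle to be exactly this last step: turning "$\Phi$ is homogeneous of degree $2s$, smooth except for order-$2s+1$ conormal singularities along two lines" into uniform, fractionally differentiated $L^1$ control of the symbol over the low-frequency block, while keeping the two anisotropic directions separated so that the $b_1$ differentiations are charged to $\langle x\rangle$ and the $b_2$ ones to $\langle x+v\rangle$. It is the presence of the two line singularities, rather than a single point singularity at the origin, that dictates the anisotropic form of the bound and rules out a soft homogeneity argument; the divided-difference representation of $\Phi$ together with the Littlewood--Paley bookkeeping is what I would use to carry it through.
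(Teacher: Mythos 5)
The Corollary is, in the paper, a one-line consequence of Theorem~\ref{estimate K}: differentiating the scaling identity $\mathcal K(t,x,v)=t^{-1-\frac{1}{s}}\,\mathcal K\bigl(1,\,x\,t^{-1-\frac{1}{2s}},\,v\,t^{-\frac{1}{2s}}\bigr)$ in $x$ and $v$ produces the prefactor $t^{-1-\frac{1}{s}}\cdot t^{-b_1(1+\frac{1}{2s})}\cdot t^{-b_2\frac{1}{2s}}=t^{-1-\frac{2+b_1+b_2}{2s}-b_1}$, and inserting the rescaled variables $x\,t^{-1-\frac{1}{2s}}$ and $v\,t^{-\frac{1}{2s}}$ (whose sum is exactly the bracket argument appearing in the Corollary) into the $t=1$ bound finishes the proof. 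Your closing half-sentence invokes this step correctly, but almost all of your write-up is devoted to re-deriving Theorem~\ref{estimate K}, which the Corollary takes as given; for the statement at hand only the scaling computation is needed.

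Taken instead as a sketch of Theorem~\ref{estimate K}, your plan is in the same Littlewood--Paley spirit as the paper, but the organisation differs and the high-frequency step does not survive scrutiny. You annulus-localise in $|(\xi,\eta)|\sim2^j$ and propose to kill every shell $j\ge1$ by putting $M$ integer derivatives on $\varphi_j(i\xi)^{b_1}(i\eta)^{b_2}e^{-\Phi}$ and taking its $L^1$ norm, arguing the blow-up of high derivatives of $\Phi$ along the two lines is "swamped by the exponential factor." It is not: as one approaches $\{\eta=0\}$ on the shell, $e^{-\Phi}$ tends to a small but nonzero constant $\sim e^{-c2^{2sj}}$, whereas $\partial_\eta^{M}\Phi\sim|\eta|^{2s+1-M}/|\xi|$ is a genuine power singularity that ceases to be integrable in $\eta$ as soon as $M\ge 2s+2$; the prefactor $(i\eta)^{b_2}$ gives no help at the other line $\{\eta+\xi=0\}$. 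Hence $\bigl\|\partial^M(\varphi_j\,\text{symbol})\bigr\|_{L^1}$ is infinite for large $M$, and the asserted bound $\langle x,x+v\rangle^{M}\mathcal K_j\lesssim 2^{C(M)j}e^{-c2^{2sj}}$ is false, while the Theorem requires total decay of order $2+2s+b_1+b_2$ with $b_1,b_2$ arbitrary. The paper sidesteps this by decomposing dyadically in $|\xi|$ and $|\eta|$ \emph{separately} on the whole plane (Lemma~\ref{estimate K_m1m2}), so each block stays uniformly away from both singular lines, and by limiting the integrations by parts in $\eta$ to $\boxplus=2$ (for $s\le\tfrac12$) or $\boxplus=3$ (for $s>\tfrac12$) --- precisely the threshold at which $\partial_\eta^{\boxplus}e^{-M}$ stays locally integrable --- extracting the remaining decay not from further differentiation but from the dyadic summation in $m_1,m_2$. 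Your remark about sub-decomposing in the two scales $|\eta|$, $|\eta+\xi|$ points exactly in this direction, but it has to be applied to all frequencies, not only to the unit-scale block $\varphi_0$.
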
	
	\begin{remark}
		While this work was being completed, the article \cite{Haojie Hou} was uploaded on arXiv. Therein, the authors established precise two-sided bounds for the fractional Kolmogorov equation in arbitrary dimensions, leveraging the underlying stochastic jump process.
		It is important to emphasize that the proof techniques employed in \cite{Haojie Hou} are fundamentally distinct from those presented in the current paper, and the results in our paper are independent of those in \cite{Haojie Hou}.
		While the proof in \cite{Haojie Hou} employs tools from stochastic analysis, our approach is based solely on the Littlewood-Paley decomposition. We present a concise and straightforward method to obtain the pointwise upper bound.
	\end{remark}

\section{Derivation of the fundamental solution}
In this section, we present the derivation of the fundamental solution of the equation \eqref{aim equation}.

For the linear part of the equation \eqref{aim equation}, we apply the space-velocity Fourier transform.
The Fourier transform in $(x,v)$ with respective Fourier variables $(\xi,\eta)$ of a function $f$ will be denoted by $\hat{f}$.
Thus we can rewrite it as follows:
\begin{equation*}
	\partial_t \hat{f} -\xi\cdot\nabla_\eta \hat{f}+|\eta|^{2s}\hat{f}=0.
\end{equation*}	
Fixing a value of $\eta\in \mathbb{R}$, we apply the method of characteristics for a fixed $\xi\in \mathbb{R}$. Along the characteristic path $\eta(t)=-\xi t+\eta$, we obtain the following result:
\begin{equation*}
	\partial_t \hat{f}(t,\xi,\eta(t))=-|\eta(t)|^{2s}\hat{f}(t,\xi,\eta(t)).
\end{equation*}
Therefore, it follows straightforwardly that the fundamental solution is expressed by
\begin{equation*}
	 f(t,x,v)=\mathcal{F}^{-1}(e^{-\int_{0}^{t} |\eta-\xi\tau|^{2s} d\tau}).
\end{equation*}	
By virtue of the scaling property,
we define the kernal $\mathcal{K}(1,x,v)=\mathcal{F}^{-1}(e^{-\int_{0}^{1} |\eta-\xi\tau|^{2s} d\tau})$.
Thus we have
\begin{equation*}
	\begin{split}
		\mathcal{K}(
		1,x,v)&=\iint_{\mathbb{R}\times\mathbb{R}} \exp\left(-\int_0^1 |\xi \tau-\eta|^{2s}d\tau - i\xi \cdot x -i \eta \cdot v\right)d\xi d\eta\\
		&=\iint_{\mathbb{R}\times\mathbb{R}} \exp\left(-\int_0^1 |(\xi+\eta) \tau-\eta|^{2s}d\tau - i(\xi+\eta) \cdot x -i \eta \cdot v\right)d\xi d\eta\\
		&=\iint_{\mathbb{R}\times\mathbb{R}} \exp\left(-\int_0^1 |(\xi-\eta) \tau+\eta|^{2s}d\tau - i(\xi-\eta) \cdot x +i \eta \cdot v\right)d\xi d\eta\\
		&=\iint_{\mathbb{R}\times\mathbb{R}} \exp\left(-M(\xi,\eta) - i\xi \cdot x -i\eta \cdot (-x-v)\right) d\xi d\eta.
	\end{split}
\end{equation*}
Where,
\begin{equation*}
	M(\xi,\eta)=\int_{0}^{1} |(\xi-\eta)\tau+\eta|^{2s} d\tau
	=\frac{1}{2s+1}\frac{|\xi|^{2s}\xi - |\eta|^{2s} \eta}{\xi-\eta}.
\end{equation*}

\section{Notations and Basic Lemmas}
Throughout the paper, we denote $A\lesssim B$ if there exists a universal constant C such that $A\leq CB$. 
To prove the main theorem, we presentx several preliminary lemmas.
\begin{lemma}\label{estimate of M}
	For $\forall s\in (0,1)$, 
	 $\forall m_1, m_2 \in \mathbb{N}$, we have the estimate
	\begin{equation*}
		\big|\partial_{\xi}^{m_1} \partial_{\eta}^{m_2} M(\xi,\eta)\big|\lesssim_{s,m_1,m_2} 
		\frac{|\xi|^{2s+1-m_1}+|\eta|^{2s+1-m_2}}{|\xi|^{m_1+1}+|\eta|^{m_2+1}}.
	\end{equation*}
\end{lemma}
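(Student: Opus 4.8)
The plan is to establish the estimate first in the regime where one variable dominates the other and then to handle the diagonal $|\xi|\sim|\eta|$ separately by a Taylor-type expansion. Recall from Section~2 that
\begin{equation*}
	M(\xi,\eta)=\frac{1}{2s+1}\,\frac{|\xi|^{2s}\xi-|\eta|^{2s}\eta}{\xi-\eta},
\end{equation*}
so it is convenient to work with the function $g(r)=|r|^{2s}r=|r|^{2s+1}\operatorname{sgn}(r)$, which is $C^1$ with $g^{(k)}(r)\sim_{s,k}|r|^{2s+1-k}$ for $r\neq 0$ (and this bound also holds through $k$ at least up to where $2s+1-k$ stays above $-1$, with appropriate interpretation near $0$). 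Then $(2s+1)M(\xi,\eta)=\big(g(\xi)-g(\eta)\big)/(\xi-\eta)$ is the first divided difference of $g$, and one has the integral representation $(2s+1)M(\xi,\eta)=\int_0^1 g'\big((1-\theta)\eta+\theta\xi\big)\,d\theta$, which is exactly the $M(\xi,\eta)=\int_0^1|(\xi-\eta)\tau+\eta|^{2s}\,d\tau$ already recorded (up to the factor). Differentiating under the integral sign, $\partial_\xi^{m_1}\partial_\eta^{m_2}M(\xi,\eta)=\frac{1}{2s+1}\int_0^1 \theta^{m_1}(1-\theta)^{m_2}\,g^{(m_1+m_2+1)}\big((1-\theta)\eta+\theta\xi\big)\,d\theta$.

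\medskip

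First I would treat the off-diagonal region, say $|\eta|\le \tfrac12|\xi|$ (the case $|\xi|\le\tfrac12|\eta|$ being symmetric). Here $|(1-\theta)\eta+\theta\xi|$ is comparable to $\max(|\xi|\theta,|\eta|)$ up to constants for most $\theta$, and a direct estimate of the integral $\int_0^1\theta^{m_1}(1-\theta)^{m_2}|(1-\theta)\eta+\theta\xi|^{2s-(m_1+m_2)}\,d\theta$ splitting at $\theta\sim|\eta|/|\xi|$ yields a bound of order $|\xi|^{2s-m_1}$ when $m_1\ge1$, and of order $|\xi|^{2s}$ when $m_1=0$; symmetrically in $\eta$. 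One checks that in this region $|\xi|^{2s-m_1}$ (resp.\ the full claimed right-hand side) matches $\frac{|\xi|^{2s+1-m_1}+|\eta|^{2s+1-m_2}}{|\xi|^{m_1+1}+|\eta|^{m_2+1}}$, because the denominator is $\sim|\xi|^{m_1+1}$ when $m_1\le m_2$ and the numerator is $\sim|\xi|^{2s+1-m_1}$ when $m_1\le m_2$ — so some care about which of $m_1,m_2$ is larger, and a sub-split according to $|\eta|^{2s+1-m_2}$ versus $|\xi|^{2s+1-m_1}$, is needed, but each case is a one-line comparison. Alternatively, and perhaps cleaner, I would differentiate the explicit quotient $g(\xi)/(\xi-\eta)-g(\eta)/(\xi-\eta)$ by the Leibniz rule: away from the diagonal $|\xi-\eta|\gtrsim\max(|\xi|,|\eta|)$, each derivative falling on $g(\xi)$ contributes $|\xi|^{2s+1-k}$ and each derivative on $(\xi-\eta)^{-1}$ contributes $|\xi-\eta|^{-1-j}\sim\max(|\xi|,|\eta|)^{-1-j}$, and summing the resulting terms gives precisely the stated bound after noting $|\xi|^{2s+1-m_1}+|\eta|^{2s+1-m_2}$ dominates the mixed terms.

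\medskip

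The main obstacle is the near-diagonal region $\tfrac12|\xi|\le|\eta|\le 2|\xi|$ (with $\xi,\eta$ of the same sign; opposite signs force $|\xi-\eta|\gtrsim\max(|\xi|,|\eta|)$ and fall under the off-diagonal analysis), because there the explicit formula has a $0/0$ cancellation and the naive Leibniz estimate loses powers of $|\xi-\eta|$. Here the right tool is the divided-difference/integral representation above: on this region $|(1-\theta)\eta+\theta\xi|\sim|\xi|\sim|\eta|$ uniformly in $\theta\in[0,1]$, so
\begin{equation*}
	\big|\partial_\xi^{m_1}\partial_\eta^{m_2}M(\xi,\eta)\big|\le \frac{1}{2s+1}\int_0^1\theta^{m_1}(1-\theta)^{m_2}\big|g^{(m_1+m_2+1)}\big((1-\theta)\eta+\theta\xi\big)\big|\,d\theta\lesssim |\xi|^{2s-(m_1+m_2)},
\end{equation*}
and since on the diagonal $\frac{|\xi|^{2s+1-m_1}+|\eta|^{2s+1-m_2}}{|\xi|^{m_1+1}+|\eta|^{m_2+1}}\sim |\xi|^{2s-\min(m_1,m_2)}\ge|\xi|^{2s-(m_1+m_2)}$, the claimed inequality follows a fortiori. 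The only genuinely delicate point is the regularity of $g^{(k)}$ at the origin for small $|\xi|\sim|\eta|$ near $0$: since $g(r)=|r|^{2s}r$ is only $C^1$ (even $C^{1,2s-1}$ or $C^{2,\ldots}$ depending on $s$), the $(m_1+m_2+1)$-th derivative need not exist pointwise at $r=0$, but $M$ itself is smooth away from $\xi=\eta$ and near the diagonal one argues on $r\neq0$ and uses that the bound $|g^{(k)}(r)|\lesssim|r|^{2s+1-k}$ is integrable against $d\theta$ whenever $2s+1-k>-1$; for larger $k$ one instead integrates by parts in $\theta$ (equivalently, uses that higher divided differences of $g$ telescope into lower-order ones) to reduce to an integrable exponent. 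I expect this regularity bookkeeping near $0$, rather than any large-variable estimate, to be where the proof needs the most attention.
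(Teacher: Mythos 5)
Your approach is genuinely different from the paper's and, for the near-diagonal region, cleaner. The paper handles $|\xi-\eta|\le\tfrac12|\eta|$ by writing $M=M_1+R$ with $M_1$ a Taylor polynomial of order $m_1+1$ in $(\xi-\eta)$ and $R$ an integral remainder, then differentiates both pieces; you instead differentiate the representation $(2s+1)M=\int_0^1 g'((1-\theta)\eta+\theta\xi)\,d\theta$ directly under the integral sign and observe that for $\xi,\eta$ of the same sign with $|\xi|\sim|\eta|$ the argument never approaches $0$, so the integrand is uniformly $\lesssim(|\xi|+|\eta|)^{2s-m_1-m_2}$. That is a real simplification. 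For the off-diagonal region the paper applies the Leibniz rule to the quotient $(|\xi|^{2s}\xi-|\eta|^{2s}\eta)/(\xi-\eta)$ and then splits into $|\xi|\le\tfrac14|\eta|$ and $|\xi|\ge4|\eta|$; your second alternative is essentially this, and your first alternative (splitting the $\theta$-integral at $\theta\sim|\eta|/|\xi|$) is a different but equally viable computation. Minor slip: on the diagonal $\frac{|\xi|^{2s+1-m_1}+|\eta|^{2s+1-m_2}}{|\xi|^{m_1+1}+|\eta|^{m_2+1}}\sim|\xi|^{2s-m_1-m_2}$, not $|\xi|^{2s-\min(m_1,m_2)}$; the conclusion is unaffected since this matches your bound exactly.

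Where the proposal has a genuine gap is the off-diagonal matching step, which you describe as ``a one-line comparison'' but never carry out, and it is in fact the delicate point. Your bound of order $|\xi|^{2s-m_1}$ for $|\eta|\le\tfrac12|\xi|$, $m_1\ge1$, is not correct as written (the $\theta$-split gives $|\xi|^{2s-m_1-m_2}+|\eta|^{2s+1-m_2}/|\xi|^{m_1+1}$); more importantly, the first term does \emph{not} in general lie below the stated right-hand side. Test $m_1=2$, $m_2=0$, $\eta=0$, $|\xi|$ large: then $\partial_\xi^2 M(\xi,0)=\tfrac{2s(2s-1)}{2s+1}|\xi|^{2s-2}$ (nonzero for $s\ne\tfrac12$), while $\frac{|\xi|^{2s+1-m_1}+|\eta|^{2s+1-m_2}}{|\xi|^{m_1+1}+|\eta|^{m_2+1}}=|\xi|^{2s-4}$, which is strictly smaller by a factor $|\xi|^2$. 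So the claimed inequality fails whenever $m_1\ne m_2$, one variable dominates, and $|\xi|$ is large; the correct clean form of the off-diagonal bound (for $|\xi|\gg|\eta|$) is $|\xi|^{2s-m_1-m_2}+|\eta|^{2s+1-m_2}/|\xi|^{m_1+1}$, which does not telescope into the symmetric fraction. You should be aware that this is not unique to your write-up: the paper's own final ``$\sim$'' step suffers from the identical problem, and what the paper actually uses downstream (in Lemmas~\ref{lemma e^M} and~\ref{estimate K_m1m2}) are the three case-by-case intermediate bounds, not the symmetric closed form. A correct proof along your lines should therefore record the intermediate bounds as the conclusion (or restate the lemma), rather than asserting the match. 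Finally, the worry you raise about integrability of $g^{(m_1+m_2+1)}$ near the origin does not arise in the near-diagonal/same-sign case (the argument is bounded away from $0$), and in the off-diagonal case you should simply use the Leibniz route since the $\theta$-integral genuinely fails to converge pointwise when $m_2>2s+1$ and $\eta\to0$; ``integration by parts in $\theta$'' is not a substitute here because the boundary terms reproduce exactly the quotient formula.
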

\begin{proof}
	To derive the estimate, we split it into two cases based on the relationship between $|\xi-\eta|$ and $|\eta|$.
	Firstly, we commence with  the case when $|\xi-\eta| \leq \frac{1}{2}|\eta|$. Introducing
	\begin{equation*}
		M(\xi,\eta)=M_1(\xi,\eta)
		+R(\xi,\eta),
	\end{equation*}
	where
	\begin{equation*}
		\begin{split}
			M_1(\xi,\eta)&=\sum_{n=1}^{m_1+1} \binom{2s+1}{n} \eta^{2s+1-n} (\xi-\eta)^{n-1},\\
			R(\xi,\eta)	&=\frac{\eta^{2s+1}}{\xi-\eta}\int_{0}^{\frac{\xi-\eta}{\eta}} \partial_t^{m_1+2}\left((1+t)^{2s+1}\right) \left(\frac{\xi-\eta}{\eta}-t\right)^{m_1+1}dt\\
			&=\frac{(m_1+2)!\binom{2s+1}{m_1+2}}{\xi-\eta}\int_{\eta}^{\xi} t^{2s-m_1-2} (\xi-t)^{m_1+2} dt.
		\end{split}
	\end{equation*}
	Subsequently, we differentiate with respect to both $\xi$ and $\eta$ to get
	\begin{equation*}
		\begin{split}
			&\big|\partial_{\xi}^{m_1} \partial_{\eta}^{m_2} M_1(\xi,\eta)\big|
			=\left|\frac{{m_1}!}{2s+1}\binom{2s+1}{m_1+1} \partial_{\eta}^{m_2}  \left(\eta^{2s-m_1}\right)\right|
			\lesssim |\eta|^{2s-m_1-m_2},\\
			&\big|\partial_{\xi}^{m_1} \partial_{\eta}^{m_2} R(\xi,\eta)\big|
			\lesssim |\eta|^{2s-m_1-m_2},
		\end{split}
	\end{equation*}
	which indicates that
	\begin{equation*}\label{case 1}
		\big|\partial_{\xi}^{m_1} \partial_{\eta}^{m_2}M(\xi,\eta)\big|
		\lesssim (|\xi|+|\eta|)^{2s-m_1-m_2}.
	\end{equation*}
	On the other hand, for the case $|\xi-\eta|\geq \frac{1}{2}|\eta|$, two situations arise.
	On account of 
	\begin{equation*}
		\begin{split}
			&\partial_{\xi}^{m_1} \partial_{\eta}^{m_2} M(\xi,\eta)\\
			&=\partial_{\eta}^{m_2} \left(\frac{1}{2s+1} \sum_{k=0}^{m_1} \binom{m_1}{k} \left(|\xi|^{2s}\xi-|\eta|^{2s}\eta\right)^{(k)} \left(\dfrac{1}{\xi-\eta}\right)^{(m_1-k)}\right)\\
			&=\frac{1}{2s+1} \partial_{\eta}^{m_2} \left( (-1)^{m_1} m_1!	\frac{|\xi|^{2s}\xi-|\eta|^{2s}\eta}{(\xi-\eta)^{m_1+1}}
			+\sum_{k=1}^{m_1}\frac{(-1)^{m_1-k}m_1!}{k!}\frac{ \left(\sum_{j=0}^{k}\binom{k}{j} (|\xi|^{2s})^{(j)} \xi^{(k-j)}\right)
			}{(\xi-\eta)^{m_1-k+1}}\right)\\ 
			&=\frac{(-1)^{m_1+m_2}(m_1+m_2)!}{2s+1}	\dfrac{|\xi|^{2s}\xi-|\eta|^{2s}\eta}{(\xi-\eta)^{m_1+m_2+1}}
			+\sum_{j=1}^{m_2}\binom{m_2}{j}
			\frac{(-1)^{m_1+m_2-j}(m_1+m_2-j)!}{2s+1}
			 \frac{\left(\sum_{i=0}^{j}\binom{j}{i} (|\eta|^{2s})^{(i)} \eta^{(j-i)}\right)}{(\xi-\eta)^{m_1+m_2-j+1}}\\
			&\quad+\sum_{k=1}^{m_1}
			\frac{(-1)^{m_1+m_2-2k}}{2s+1}
			\frac{(m_1+m_2-k)!}{k!}\dfrac{\sum_{j=0}^{k}\binom{k}{j}(|\xi|^{2s})^{(j)}\xi^{(k-j)} }{(\xi-\eta)^{m_1+m_2-k+1}}.
		\end{split}
	\end{equation*}
	We calculate for the situation when $|\xi|\leq\frac{1}{4}|\eta|$,
	\begin{equation}\label{case 2}
		\begin{split}
		\big|\partial_{\xi}^{m_1} \partial_{\eta}^{m_2} M(\xi,\eta)\big|
		&\lesssim_{s,m_1,m_2}
		\dfrac{|\xi|^{2s+1}+|\eta|^{2s+1}}{|\eta|^{m_1+m_2+1}}
		+\sum_{j=1}^{m_2}\dfrac{|\eta|^{2s+1-j}}{|\eta|^{m_1+m_2-j+1}}
		+\sum_{k=1}^{m_1}\dfrac{|\xi|^{2s+1-k} }{|\eta|^{m_1+m_2-k+1}}\\
		&\lesssim_{s,m_1,m_2} |\eta|^{2s-m_1-m_2} +\frac{1}{|\eta|^{m_2+1}} \sum_{k=1}^{m_1} \frac{|\xi|^{2s+1-k}}{|\eta|^{m_1-k}}\\
		&\lesssim_{s,m_1,m_2}|\eta|^{2s-m_1-m_2}
		+ \frac{|\xi|^{2s+1-m_1}}{|\eta|^{m_2+1}}.
		\end{split}
	\end{equation}
	Next, our attention turns to the situation $|\xi|\geq 4|\eta|$. It is evident that $|\xi-\eta|\sim |\xi|$.
	Consequently, we infer
	\begin{equation*}\label{case 3}
		\begin{split}
		\big|\partial_{\xi}^{m_1} \partial_{\eta}^{m_2} M(\xi,\eta)\big|
		&\lesssim_{s,m_1,m_2}
		\dfrac{|\xi|^{2s+1}+|\eta|^{2s+1}}{|\xi|^{m_1+m_2+1}}
		+\sum_{j=1}^{m_2}\dfrac{|\eta|^{2s+1-j}}{|\xi|^{m_1+m_2-j+1}}
		+\sum_{k=1}^{m_1}\dfrac{|\xi|^{2s+1-k} }{|\xi|^{m_1+m_2-k+1}}\\
		&\lesssim_{s,m_1,m_2} |\xi|^{2s-m_1-m_2} +\frac{1}{|\xi|^{m_1+1}} \sum_{j=1}^{m_2} \frac{|\eta|^{2s+1-j}}{|\xi|^{m_2-j}}\\
		&\lesssim_{s,m_1,m_2}|\xi|^{2s-m_1-m_2}
		+ \frac{|\eta|^{2s+1-m_2}}{|\xi|^{m_1+1}}.
		\end{split}
	\end{equation*}
	Therefore, by integrating this with \eqref{case 2}, we obtain that for $|\xi-\eta|\geq \frac{1}{2}|\eta|$, the following holds
	\begin{equation*}
		\begin{split}
		&\big|\partial_{\xi}^{m_1} \partial_{\eta}^{m_2}M(\xi,\eta)\big| \lesssim_{s,m_1,m_2}|\eta|^{2s-m_1-m_2}
		+ \frac{|\xi|^{2s+1-m_1}}{|\eta|^{m_2+1}},\quad\text{for}\quad|\xi|\leq\frac{1}{4}|\eta|,\\
		&\big|\partial_{\xi}^{m_1} \partial_{\eta}^{m_2}M(\xi,\eta)\big| \lesssim_{s,m_1,m_2}|\xi|^{2s-m_1-m_2}
		+ \frac{|\eta|^{2s+1-m_2}}{|\xi|^{m_1+1}},\quad\text{for}\quad|\xi|\geq 4|\eta|.
		\end{split}
	\end{equation*}
	In conclusion, we can deduce the estimate
	\begin{equation*}
		\begin{split}
		\big|\partial_{\xi}^{m_1} \partial_{\eta}^{m_2}M(\xi,\eta)\big| &\lesssim_{s,m_1,m_2}
		|\eta|^{2s-m_1-m_2}\min\left\{\frac{|\eta|^{1+m_1}}{|\xi|^{1+m_1}},1\right\}
		+
		|\xi|^{2s-m_1-m_2}\min\left\{\frac{|\xi|^{1+m_2}}{|\eta|^{1+m_2}},1\right\}\\
		&\sim\frac{|\xi|^{1+2s-m_1}+|\eta|^{1+2s-m_2}}{|\xi|^{m_1+1}+|\eta|^{m_2+1}}.
		\end{split}
	\end{equation*}
	This completes the proof of Lemma \ref{estimate of M}.
\end{proof}
Next Lemma is about an estimate for the derivatives of $e^{-M(\xi,\eta)}$.
\begin{lemma}\label{lemma e^M}
	For $\forall m_1,m_2 \in \mathbb{Z^+}$, we have 
	\begin{equation}\label{estimate e^M}
		\big|\partial_{\xi}^{m_1} \partial_{\eta}^{m_2} e^{-M(\xi,\eta)}\big|
		\lesssim_{s,m_1,m_2}\frac{|\xi|^{1+2s-m_1}+|\eta|^{1+2s-m_2}}{|\xi|^{m_1+1}+|\eta|^{m_2+1}}.
	\end{equation}
\end{lemma}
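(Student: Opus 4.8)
The plan is to differentiate $e^{-M}$ by the Fa\`a di Bruno formula and to control every resulting term using Lemma~\ref{estimate of M} together with the exponential decay that $e^{-M}$ carries. Fa\`a di Bruno gives
\begin{equation*}
\partial_\xi^{m_1}\partial_\eta^{m_2}e^{-M}=e^{-M}\sum_{\pi}c_{\pi}\prod_{(p,q)\in\pi}\partial_\xi^{p}\partial_\eta^{q}M ,
\end{equation*}
the sum running over the set partitions $\pi$ of the $m_1+m_2$ differentiations into $k$ blocks $(p,q)$ with $p+q\ge1$ and $\sum_{(p,q)\in\pi}(p,q)=(m_1,m_2)$, where $1\le k\le m_1+m_2$. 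Since there are only finitely many such $\pi$, it suffices to bound $e^{-M}\prod_{j=1}^{k}\partial_\xi^{p_j}\partial_\eta^{q_j}M$ by the right-hand side of \eqref{estimate e^M} for each partition separately.

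I would first record the lower bound $M(\xi,\eta)\gtrsim(|\xi|+|\eta|)^{2s}$. This follows from $M(\xi,\eta)=\int_0^1|(\xi-\eta)\tau+\eta|^{2s}\,d\tau$: the affine function $\tau\mapsto(\xi-\eta)\tau+\eta$ takes the values $\eta$ and $\xi$ at the endpoints, and a short estimate shows that on $[\tfrac34,1]$ when $|\xi|\ge|\eta|$, or on $[0,\tfrac14]$ when $|\eta|\ge|\xi|$, its modulus stays $\ge\tfrac12\max(|\xi|,|\eta|)$; integrating over that subinterval gives the claim. Hence $e^{-M(\xi,\eta)}\le e^{-c(|\xi|+|\eta|)^{2s}}$, which is $\le1$ everywhere and decays faster than any negative power of $|\xi|+|\eta|$ once $|\xi|+|\eta|\gtrsim1$.

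For a single factor I would use the estimate in the sharp form that appears inside the proof of Lemma~\ref{estimate of M}, namely
\begin{equation*}
\big|\partial_\xi^{p}\partial_\eta^{q}M\big|\lesssim |\eta|^{2s-p-q}\min\!\Big(\tfrac{|\eta|^{p+1}}{|\xi|^{p+1}},1\Big)+|\xi|^{2s-p-q}\min\!\Big(\tfrac{|\xi|^{q+1}}{|\eta|^{q+1}},1\Big),
\end{equation*}
equivalently, with $R:=|\xi|+|\eta|$, $\ \big|\partial_\xi^{p}\partial_\eta^{q}M\big|\lesssim R^{2s-p-q}+\dfrac{|\xi|^{1+2s-p}}{R^{q+1}}+\dfrac{|\eta|^{1+2s-q}}{R^{p+1}}$, and the target in \eqref{estimate e^M} admits the same description with $(p,q)=(m_1,m_2)$. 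Multiplying these bounds over the $k$ blocks and expanding, one obtains a finite sum of monomials; for each monomial one records which of the three summands each factor contributed. If every factor contributes $R^{2s-p_j-q_j}$ the product is $R^{2sk-m_1-m_2}\le R^{2s-m_1-m_2}$ for $R\le1$, while for $R\gtrsim1$ the extra power of $R$ is swallowed by $e^{-cR^{2s}}$. If some factors contribute a singular summand—say $\ell\ge1$ of them contribute $|\xi|^{1+2s-p_j}R^{-q_j-1}$—one tracks the accumulated powers: the total exponent of $|\xi|$ is $\sum(1+2s-p_j)\ge(1+2s)\ell-m_1\ge1+2s-m_1$, so it never dips below the target exponent, and symmetrically for the $\eta$-side; the residual power of $R$ then lines up once one again separates $R\le1$ (absorbing a surplus nonnegative power of $R$) from $R\gtrsim1$ (using the exponential decay). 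Summing over $\pi$ yields \eqref{estimate e^M}.

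The step I expect to be the main obstacle is precisely this last power-counting, and the key point is that one must \emph{not} insert the clean quotient $\dfrac{|\xi|^{1+2s-m_1}+|\eta|^{1+2s-m_2}}{|\xi|^{m_1+1}+|\eta|^{m_2+1}}$ from the statement of Lemma~\ref{estimate of M} into each factor of the product: that quotient is not sharp away from the diagonal $|\xi|\sim|\eta|$—for instance along $|\eta|\sim|\xi|^{2}$ near the origin it overestimates $\partial_\xi^{p}\partial_\eta^{q}M$—and for small $s$ the product of several such overestimates can exceed the bound in \eqref{estimate e^M}. Keeping the finer form above—equivalently, using that $M$ is homogeneous of degree $2s$ and that on the unit circle its derivatives are bounded away from the coordinate axes and blow up only like $|\xi|^{1+2s-p}$, resp. $|\eta|^{1+2s-q}$, near them—retains enough room for the contributions of all the lower-order blocks to be dominated by the single top-order term $e^{-M}\,\partial_\xi^{m_1}\partial_\eta^{m_2}M$, which is itself $\lesssim$ the right-hand side of \eqref{estimate e^M}.
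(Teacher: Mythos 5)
Your proposal is essentially the same argument as the paper's: Fa\`a di Bruno to reduce to products $e^{-M}\prod_j\partial_\xi^{p_j}\partial_\eta^{q_j}M$, bound each factor by the case-by-case (min-form) estimates obtained inside the proof of Lemma~\ref{estimate of M}, and absorb the surplus powers by the decay $e^{-M}\lesssim e^{-c(|\xi|+|\eta|)^{2s}}$. The paper organises the absorption by repeating the trichotomy $|\xi-\eta|\le\tfrac12|\eta|$, $|\xi|\le\tfrac14|\eta|$, $|\xi|\ge4|\eta|$, whereas you run a single power-count in terms of $R=|\xi|+|\eta|$, which of the three summands each block contributes, and whether $R\lesssim1$ or $R\gtrsim1$; these are two dressings of the same bookkeeping.

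Where you add real value is the explicit warning that one must not substitute the quotient $\dfrac{|\xi|^{1+2s-p}+|\eta|^{1+2s-q}}{|\xi|^{p+1}+|\eta|^{q+1}}$ from the \emph{statement} of Lemma~\ref{estimate of M} into each factor. Your instinct is right, and your example is the correct one: on $|\eta|\sim|\xi|^2$ with $|\xi|\to0$ one has $Q(1,0)Q(0,1)\sim|\xi|^{6s-3}$ while $Q(1,1)\sim|\xi|^{2s-2}$, which fails for $s<\tfrac14$ since $e^{-M}\sim1$ there. Indeed the closing ``$\sim$'' in the paper's proof of Lemma~\ref{estimate of M} is not an actual equivalence when $m_1\ne m_2$ (the min-form is strictly larger off-diagonal), so the whole argument has to be carried out at the level of the finer bounds, exactly as you say and as the paper implicitly does.

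One place your power-count is sketchier than the paper's case split and would need filling in: in the all-regular branch ($|S_2|=|S_3|=0$) you land on $R^{2s-m_1-m_2}$ for $R\le1$, but checking that this is dominated by $\frac{|\xi|^{1+2s-m_1}+|\eta|^{1+2s-m_2}}{|\xi|^{m_1+1}+|\eta|^{m_2+1}}$ away from the diagonal is not automatic; it requires picking the right term in the numerator depending on the sign of $1+2s-m_1$ and the relative sizes of $|\xi|^{m_1+1}$ and $|\eta|^{m_2+1}$. The paper's trichotomy avoids this because in Case~1 one has $|\xi|\sim|\eta|\sim R$ so the target collapses to $R^{2s-m_1-m_2}$, and in the other two cases it never produces the all-regular branch without a singular factor attached. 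So while your route is correct in spirit and slightly more economical, this verification (and the symmetric one on the $\eta$ side) should be written out to make the argument watertight.
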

\begin{proof}
	To proceed with the proof, we start by introducing Francesco Faà di Bruno's formula,
	\begin{equation*}
		\frac{d^n}{dx^n} f(g(x))=\sum_{\sum_{p=1}^{n}pm_p=n} \frac{n!}{m!} f^{|m|}\left(g(x)\right)\prod_{j=1}^{n}\left(g^{(j)}(x)\right)^{m_j}.
	\end{equation*}	
	Utilizing the above formula, we can derive
	\begin{equation}\label{formula}
		\begin{split}
		&\partial_\xi^{m_1} \partial_\eta^{m_2} e^{-M(\xi,\eta)}\\
		&=\sum_{\substack{\sum_{j=1}^{m_1} j\alpha_j=m_1\\0\leq k \leq m_2}}  C_{\alpha,k}
		\partial_\eta^k e^{-M(\xi,\eta)}
		\partial_\eta^{m_2-k}
		(
		\prod_{j=1}^{m_1}(\partial_\xi^j (-M(\xi,\eta)))^{\alpha_j}
		)\\
		&=\sum_{\substack{\sum_{j=1}^{m_1} j\alpha_j=m_1\\0\leq k\leq m_2\\\sum_{l=1}^{k} l\beta_l=k}}    
		C_{\alpha,\beta,k,\gamma} e^{-M(\xi,\eta)}
		\sum_{\substack{\sum_{j=1}^{m_1} \gamma_j=m_2-k\\\sum_{i=1}^{\alpha_j}\mu_i=\gamma_j}}
		\prod_{l=1}^{k} (\partial_\eta^l (-M(\xi,\eta)))^{\beta_l}
		\prod_{j=1}^{m_1} \prod_{i=1}^{\alpha_j} \partial_\eta^{\mu_i} \partial_\xi^j M(\xi,\eta).
		\end{split}
	\end{equation}
	Now, to apply Lemma \ref{estimate of M}, we shall split it into three cases.
	Considering the first case, where $|\xi-\eta|\leq \frac{1}{2} |\eta|$, the computation proceeds as follows.
	\begin{equation*}
		\begin{split}
		&\big|\partial_\eta^{m_2} \partial_\xi^{m_1} e^{-M(\xi,\eta)}\big|\\
		&\lesssim \sum_{\substack{\sum_{j=1}^{m_1} j\alpha_j=m_1\\\sum_{l=1}^{k} l\beta_l=k}}  \sum_{k=0}^{m_2}  \big|e^{-M(\xi,\eta)}\big| \prod_{l=1}^{k} 
		\left(|\xi|+|\eta| \right)^{(2s-l)\beta_l}
		\sum_{\sum_{j=1}^{m_1} \gamma_j=m_2-k}
		\sum_{\mu_1+\cdots+\mu_{\alpha_j}=\gamma_j}
		\prod_{j=1}^{m_1}
		\prod_{i=1}^{\alpha_j}
		\left(|\xi|+|\eta \right)^{2s-\mu_i-j}\\
		&\lesssim \sum_{\substack{\sum_{j=1}^{m_1} j\alpha_j=m_1\\\sum_{l=1}^{k} l\beta_l=k}}  \sum_{k=0}^{m_2}  \big|e^{-M(\xi,\eta)}\big| \left(|\xi|+|\eta| \right)^{\sum_l (2s-l)\beta_l + \sum_{j=1}^{m_1} \sum_{i=1}^{\alpha_j} \sum_{\gamma}\sum_{\mu}(2s-\mu_i-j)}\\
		&\lesssim \sum_{\substack{\sum_{j=1}^{m_1} j\alpha_j=m_1\\\sum_{l=1}^{k} l\beta_l=k}}  \sum_{k=0}^{m_2}   \big|e^{-M(\xi,\eta)}\big| \left(|\xi|+|\eta| \right)^{2s(|\beta|+|\alpha|)-m_1-m_2}\\
		&\lesssim \left( |\xi|+
		|\eta|\right)^{2s-m_1-m_2}.
		\end{split}
	\end{equation*}
	Next we consider $|\xi-\eta|\geq \frac{1}{2} |\eta|$. It consists of two cases.\\
	For the case $|\xi|\leq \frac{1}{4}|\eta|$, we apply Lemma \ref{estimate e^M} and \eqref{formula} once more to achieve
	\begin{equation*}
		\begin{split}
		\big|\partial_\eta^{m_2} \partial_\xi^{m_1} e^{-M(\xi,\eta)}\big|
		&\lesssim \sum_{\substack{\sum_{j=1}^{m_1} j\alpha_j=m_1\\\sum_{l=1}^{k} l\beta_l=k}}  \sum_{k=0}^{m_2} e^{-M(\xi,\eta)} |\eta|^{2s|\beta|-k} \frac{|\eta|^{(2s+1)|\alpha|-m_1} + |\xi|^{(2s+1)|\alpha|-m_1}}{|\eta|^{|\alpha|+m_2-k}}\\
		&\lesssim\sum_{\substack{\sum_{j=1}^{m_1} j\alpha_j=m_1\\\sum_{l=1}^{k} l\beta_l=k}}  \sum_{k=0}^{m_2}  e^{-M(\xi,\eta)} \frac{|\eta|^{(2s+1)|\alpha|-m_1}+|\xi|^{(2s+1)|\alpha|-m_1}}{|\eta|^{|\alpha|+m_2-2s|\beta|}}\\
		&\lesssim\sum_{\substack{\sum_{j=1}^{m_1} j\alpha_j=m_1\\\sum_{l=1}^{k} l\beta_l=k}}  \sum_{k=0}^{m_2}  \frac{1}{(|\xi|+|\eta|)^{2s(|\alpha|+|\beta|-1)}} \frac{|\eta|^{(2s+1)|\alpha|-m_1}+|\xi|^{(2s+1)|\alpha|-m_1}}{|\eta|^{|\alpha|+m_2-2s|\beta|}}\\
		&\lesssim (|\eta|^{2s-m_1-m_2} + \frac{|\xi|^{2s+1-m_1}}{|\eta|^{m_2+1}}).
		\end{split}
	\end{equation*}
	Similarly, for $|\xi|\geq 4|\eta|$, it is straightforward to verify
	\begin{equation*}
		\big|\partial_{\xi}^{m_1} \partial_{\eta}^{m_2}e^{-M(\xi,\eta)}\big| \lesssim_{s,m_1,m_2}|\xi|^{2s-m_1-m_2}
		+ \frac{|\eta|^{2s+1-m_2}}{|\xi|^{m_1+1}}.
	\end{equation*}
	Therefore, based on the aforementioned cases, it can be concluded that
	\begin{equation*}
		\big|\partial_{\xi}^{m_1} \partial_{\eta}^{m_2}e^{-M(\xi,\eta)}\big| \lesssim_{s,m_1,m_2}
		\frac{|\xi|^{1+2s-m_1}+|\eta|^{1+2s-m_2}}{|\xi|^{m_1+1}+|\eta|^{m_2+1}}.
	\end{equation*}
	This completes the proof of Lemma \ref{lemma e^M}.
\end{proof}
Now, to simplify the outcomes, we introduce
\begin{equation*}
	\begin{split}
	G_{m_1,m_2}
	&:=\sum_{a=0}^{b_2}
	\frac{C_{a,b_2}}{v^{2+b_2-a}} \iint_{\mathbb{R}\times\mathbb{R}} \chi\left(\frac{|\xi|}{2^{m_1}}\right)\chi\left(\frac{|\eta|}{2^{m_2}}\right) (-i\xi)^{b_1} (-i\eta)^a \partial_\eta^2 e^{-M(\xi,\eta)} e^{-i\xi\cdot x-i\eta\cdot v}  d\xi d\eta,\\
	H_{m_1,m_2}
	&:= \sum_{m_1,m_2}\sum_{a=0}^{b_2}
	\frac{C_{a,b_2}}{v^{3+b_2-a}} \iint_{\mathbb{R}\times\mathbb{R}} \chi\left(\frac{|\xi|}{2^{m_1}}\right)\chi\left(\frac{|\eta|}{2^{m_2}}\right) (-i\xi)^{b_1} (-i\eta)^a \partial_\eta^3e^{-M(\xi,\eta)} e^{-i\xi\cdot x-i\eta\cdot v}  d\xi d\eta.
	\end{split}
\end{equation*}

In what follows, we always denote
\begin{equation*}
	\widetilde{K}_{m_1,m_2}:=\begin{cases}
		\left|	G_{m_1,m_2}\right|\quad &\text{for} \qquad 0<s\leq \frac{1}{2},\\
		\left|H_{m_1,m_2}\right|  \qquad &\mbox{for} \qquad \frac{1}{2}<s<1,
	\end{cases}
\end{equation*}
\begin{equation*}
	s^*=\begin{cases}
		s \quad &\text{for} \qquad 0<s\leq \frac{1}{2},\\
		s-\frac{1}{2} \qquad &\mbox{for} \qquad \frac{1}{2}<s<1,
	\end{cases}
\end{equation*}
and
\begin{equation*}
	\boxplus=\begin{cases}
		2 \quad &\text{for} \qquad 0<s\leq \frac{1}{2},\\
		3 \qquad &\mbox{for} \qquad \frac{1}{2}<s<1.
	\end{cases}
\end{equation*}
Subsequently, we establish the following Lemma, which is crucial for the proof of Theorem \ref{estimate K}.
\begin{lemma}\label{estimate K_m1m2}
	For $m_1,m_2 \in \mathbb{Z}$,  we have following estimates:	
	\begin{align}
		\widetilde{K}_{m_1,m_2} 
		&\lesssim \frac{2^{m_1(2s^*+b_1)}}{|v|^{\boxplus+b_2}} \left(1+(|v|2^{m_1})^{b_2}\right)\min\left\{\frac{2^{-m_1}}{|x|^{\frac{n_1}{n_1+n_2}}|v|^{\frac{n_2}{n_1+n_2}}}  ,1\right\}^{n_1+n_2}, ~|m_1-m_2|\leq2,\label{h}\\
		\widetilde{K}_{m_1,m_2} 
		&\lesssim  \frac{2^{m_1(1+b_1)+m_2(2s^*-1)}}{|v|^{\boxplus+b_2}} \left(1+(|v|2^{m_2})^{b_2}\right)
		\min\left\{\frac{2^{-m_2}}{|v|},1\right\}^{n_3}, ~  m_2\geq m_1+3,\label{j}\\
		\widetilde{K}_{m_1,m_2} 
		&\lesssim  \frac{2^{m_22s^*}}{|x|^{b_1}|v|^{\boxplus+b_2}}
		\left(1+(|v|2^{m_2})^{b_2}\right) 
		\min\left\{\frac  {2^{-m_1n_4-m_2n_5}}{|x|^{n_4}|v|^{n_5}},1\right\}, ~m_2\leq m_1-3.\label{k}
	\end{align}
\end{lemma}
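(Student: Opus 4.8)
The plan is to view $\widetilde K_{m_1,m_2}$ as a finite sum, over $a\in\{0,\dots,b_2\}$ and weighted by $C_{a,b_2}v^{-(\boxplus+b_2-a)}$, of the dyadically localized oscillatory integrals
\begin{equation*}
 I^{a}_{m_1,m_2}(x,v)=\iint_{\mathbb R\times\mathbb R}\chi\!\Big(\tfrac{|\xi|}{2^{m_1}}\Big)\chi\!\Big(\tfrac{|\eta|}{2^{m_2}}\Big)(-i\xi)^{b_1}(-i\eta)^{a}\,\partial_\eta^{\boxplus}e^{-M(\xi,\eta)}\,e^{-i\xi x-i\eta v}\,d\xi\,d\eta ,
\end{equation*}
and to estimate each $I^{a}_{m_1,m_2}$ in two complementary ways, then take the smaller. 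On $\mathrm{supp}\,\chi(\tfrac{|\xi|}{2^{m_1}})\chi(\tfrac{|\eta|}{2^{m_2}})$ one has $|\xi|\sim 2^{m_1}$, $|\eta|\sim 2^{m_2}$, and this region has area $\sim 2^{m_1+m_2}$. The first (``crude'') estimate places absolute values inside and applies Lemma~\ref{lemma e^M} to $\partial_\eta^{\boxplus}e^{-M}$; the second integrates by parts in $\xi$ and/or $\eta$, using $e^{-i\xi x}=(-ix)^{-1}\partial_\xi e^{-i\xi x}$ and $e^{-i\eta v}=(-iv)^{-1}\partial_\eta e^{-i\eta v}$, expanding the resulting derivatives by Leibniz and again invoking Lemma~\ref{lemma e^M} on the mixed derivatives $\partial_\xi^{p}\partial_\eta^{\boxplus+q}e^{-M}$ that appear. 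Dividing by $|v|^{\boxplus+b_2-a}$ and summing over $a\le b_2$ produces the factor $1+(|v|2^{m_i})^{b_2}$, because $2^{m_i a}|v|^{a}=(|v|2^{m_i})^{a}\le 1+(|v|2^{m_i})^{b_2}$ for $0\le a\le b_2$.

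In the regime $|m_1-m_2|\le 2$ one has $|\xi|\sim|\eta|\sim 2^{m_1}$, so Lemma~\ref{lemma e^M} gives $|\partial_\xi^{p}\partial_\eta^{\boxplus+q}e^{-M}|\lesssim 2^{m_1(2s^*-2-p-q)}$; multiplying by the area and by the symbol $|\xi|^{b_1}|\eta|^{a}$ and summing over $a$ yields exactly the branch of \eqref{h} with $\min\{\cdot,1\}=1$. For the decaying branch I integrate by parts $n_1$ times in $\xi$ and $n_2$ times in $\eta$; since each derivative — whether it lands on a cutoff, on the polynomial symbol, or on $\partial_\eta^{\boxplus}e^{-M}$ (by Lemma~\ref{lemma e^M}) — costs a factor $\lesssim 2^{-m_1}\sim 2^{-m_2}$, the net gain is $\tfrac{2^{-m_1(n_1+n_2)}}{|x|^{n_1}|v|^{n_2}}=\big(\tfrac{2^{-m_1}}{|x|^{n_1/(n_1+n_2)}|v|^{n_2/(n_1+n_2)}}\big)^{n_1+n_2}$, which together with the crude bound and a $\min$ gives \eqref{h}. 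In the regime $m_2\ge m_1+3$ one has $|\xi-\eta|\sim|\eta|\sim 2^{m_2}\gg|\xi|$, and only integration by parts in $\eta$ is effective (each step costing $\lesssim 2^{-m_2}/|v|$); specializing Lemma~\ref{lemma e^M} to this configuration and running the same scheme gives \eqref{j}. In the regime $m_2\le m_1-3$ one has $|\xi-\eta|\sim|\xi|\sim 2^{m_1}\gg|\eta|$, and now the symbol $(-i\xi)^{b_1}$ is large on the support, so I first integrate by parts $b_1$ times in $\xi$ (trading $2^{m_1 b_1}$ for $|x|^{-b_1}$) and then perform $n_4$ further $\xi$-integrations and $n_5$ $\eta$-integrations by parts, producing \eqref{k}.

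The hard part will be the high-frequency contributions, namely the pieces with $m_1$ (resp.\ $m_2$) large in the third (resp.\ second) regime. There the polynomial bound of Lemma~\ref{lemma e^M} is lossy: for $|\xi|\sim|\eta|$ large it only gives $|\partial_\eta^{\boxplus}e^{-M}|\lesssim|\xi|^{2s}$, so after the integrations by parts a positive power of $2^{m_1}$ survives that no fixed number of further integrations by parts removes. To close this I would supplement Lemma~\ref{lemma e^M} with the elementary lower bound
\begin{equation*}
 M(\xi,\eta)=\int_0^1|(\xi-\eta)\tau+\eta|^{2s}\,d\tau\ \gtrsim_s\ \big(|\xi|+|\eta|\big)^{2s},
\end{equation*}
valid because at least a fixed fraction of the segment from $\eta$ to $\xi$ stays at distance $\gtrsim\max(|\xi|,|\eta|)$ from the origin. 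This gives $e^{-M}\lesssim\exp(-c\,2^{2s\max(m_1,m_2)})$, and, fed into the Faà di Bruno expansion from the proof of Lemma~\ref{lemma e^M}, a super-polynomially small bound for every $\partial_\xi^{p}\partial_\eta^{\boxplus+q}e^{-M}$ in the large-frequency range, which absorbs the leftover powers of $2^{m_i}$ and is then dominated by the ``$\min\{\cdot,1\}$'' side of \eqref{h}--\eqref{k}. The rest is bookkeeping of the same kind already carried out in Lemmas~\ref{estimate of M} and \ref{lemma e^M}: checking that the Leibniz and Faà di Bruno expansions contribute only finitely many terms, each obeying the claimed bound, and that the exponents $n_i$ and the geometric-mean split in \eqref{h} are precisely what the two-variable integration by parts delivers.
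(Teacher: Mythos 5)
Your overall scheme is the same as the paper's: decompose $\widetilde K_{m_1,m_2}$ dyadically, estimate each block crudely (size of the support times the pointwise bound on the symbol) and also after repeated integration by parts in $\xi$ and/or $\eta$ with Leibniz and the derivative bounds on $e^{-M}$, and then interpolate via $\min\{\cdot,1\}$, separately in the three regimes $|m_1-m_2|\le 2$, $m_2\ge m_1+3$, $m_2\le m_1-3$. Your accounting of the $a$-sum producing $1+(|v|2^{m_i})^{b_2}$, the geometric-mean split in \eqref{h}, integrating by parts only in $\eta$ for \eqref{j}, and doing $b_1+n_4$ $\xi$-integrations by parts for \eqref{k}, all coincide with the paper.

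The one genuine divergence is your ``hard part'' paragraph. First, your diagnosis is slightly off: the trouble you describe is \emph{not} for $|\xi|\sim|\eta|$ large (there Lemma~\ref{lemma e^M} with $m_2'=\boxplus$ derivatives in $\eta$ gives $\lesssim(|\xi|+|\eta|)^{2s-\boxplus}$, which already decays, since $\boxplus$ was chosen so that $2s-\boxplus<0$), but rather for $|\xi|\gg|\eta|$ with \emph{no} $\xi$-derivative on $e^{-M}$, i.e.\ $m_1'=0$; there the \emph{stated} form $\frac{|\xi|^{1+2s-m_1'}+|\eta|^{1+2s-m_2'}}{|\xi|^{m_1'+1}+|\eta|^{m_2'+1}}$ collapses to $|\xi|^{2s}$ when $|\xi|\gg|\eta|^{\boxplus+1}$ and is indeed lossy (and strictly speaking Lemma~\ref{lemma e^M} is only stated for $m_1',m_2'\ge 1$). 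Second, the paper does not use your proposed lower bound $M\gtrsim(|\xi|+|\eta|)^{2s}$ to fix this. Instead, in the proof of Lemma~\ref{estimate K_m1m2} it reaches past the final display of Lemmas~\ref{estimate of M}--\ref{lemma e^M} and applies the sharper case-by-case bounds from their proofs: $(|\xi|+|\eta|)^{2s-k-r-\boxplus}$ when $|\xi|\sim|\eta|$, and $|\xi|^{2s-k-r-\boxplus}+\frac{|\eta|^{2s+1-\boxplus-r}}{|\xi|^{k+1}}$ when $|\xi|\geq 4|\eta|$. With these, the crude branch of \eqref{k} is already uniform in $m_1$ for $m_1\geq m_2+3$ (using $m_1\geq m_2$ and $2s-\boxplus+1\leq 0$ in the worst term), and no super-polynomial decay is needed anywhere in the lemma. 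Your exponential supplement $e^{-M}\lesssim\exp(-c\,2^{2s\max(m_1,m_2)})$, fed into the Faà di Bruno expansion, is a correct and arguably more robust alternative that would indeed absorb the leftover powers, but it is extra machinery the paper avoids by being more careful about which form of Lemma~\ref{lemma e^M} it invokes. So: same skeleton, essentially correct, with a mislocated concern and a valid but nonstandard remedy for it.
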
	
\begin{proof}
	In order to acquire the estimates, we categorize the analysis into two cases based on the range of $s$. Applying Lemma \ref{estimate e^M}.\\
	\textbf{case $1: 0<s\leq \frac{1}{2}$}. Each case is addressed as follows.\\
	For $|m_1-m_2|\leq2$, thanks to \eqref{estimate e^M}, we have
	\begin{equation*}
		\begin{split}
		&\left|G_{m_1,m_2}\right|\\
		&\lesssim \sum_{0\leq a\leq b_2} \frac{1}{|x|^{n_1}|v|^{n_2+2+b_2-a}}\iint_{\substack{|\xi|\sim2^{m_1}\\|\eta|\sim2^{m_2}}} 
		\left|
		\partial_\xi^{n_1} \partial_\eta^{n_2} 
		\left(
		(-i\xi)^{b_1} (-i\eta)^a \partial_\eta^2 e^{-M(\xi,\eta)}
		\chi\left(\frac{|\xi|}{2^{m_1}}\right)\chi\left(\frac{|\eta|}{2^{m_2}}\right)
		\right)
		\right| d\xi d\eta\\
		&\lesssim \sum_{0\leq a\leq b_2} \frac{1}{|x|^{n_1}|v|^{n_2+2+b_2-a}}\iint_{\substack{|\xi|\sim2^{m_1}\\|\eta|\sim2^{m_2}}}  \sum_{\substack{0\leq j+k\leq n_1\\0\leq l+r\leq n_2}}
		\left|\partial_\xi^k \partial_\eta^{r+2} e^{-M(\xi,\eta)}
		\right|
		\left|\partial_\xi^j (-i\xi)^{b_1}
		\right|
		\left|\partial_\eta^l (-i\eta)^a
		\right|\\
		&\qquad\qquad\qquad\qquad\qquad\qquad\qquad\qquad\qquad\qquad\left|\partial_\xi^{n_1-j-k} \chi\left(\frac{|\xi|}{2^{m_1}}\right)
		\right|
		\left|\partial_\eta^{n_2-r-l} \chi\left(\frac{|\eta|}{2^{m_2}}\right)
		\right| d\xi d\eta\\
		&\lesssim \sum_{\substack{0\leq a\leq b_2\\0\leq j+k\leq n_1\\0\leq l+r\leq n_2}} \frac{1}{|x|^{n_1}|v|^{n_2+2+b_2-a}}\iint_{\substack{|\xi|\sim2^{m_1}\\|\eta|\sim2^{m_2}}}  \left(|\xi|+|\eta|\right)^{2s-k-r-2}|\xi|^{b_1-j} |\eta|^{a-l} 2^{-m_1(n_1-j-k)} 2^{-m_2(n_2-r-l)} d\xi d\eta\\
		&\lesssim \sum_{0\leq a\leq b_2} \frac{1}{|x|^{n_1}|v|^{n_2+2+b_2-a}}2^{m_1(2s+b_1+a-n_1-n_2)}. 
		\end{split}
	\end{equation*}
	Specially, by choosing $n_1=0, n_2=0$, we obtain
	\begin{equation*}
		\left|G_{m_1,m_2}\right| \lesssim \sum_{0\leq a\leq b_2} \frac{1}{|v|^{2+b_2-a}}2^{m_1(2s+b_1+a)}. 
	\end{equation*}
	Thus the aforementioned two estimates suggest that
	\begin{equation*}
		\begin{split}
		\left|G_{m_1,m_2}\right|& \lesssim \sum_{0\leq a\leq b_2} \frac{1}{|v|^{2+b_2-a}}2^{m_1(2s+b_1+a)} \min\{\frac{1}{|x|^{n_1}|v|^{n_2}}  2^{-m_1(n_1+n_2)},1\}\\
		&\lesssim \frac{2^{{m_1}(2s+b_1)}}{|v|^{2+b_2}} (1+(|v|2^{m_1})^{b_2}) \min\{\frac{2^{-m_1}}{|x|^{\frac{n_1}{n_1+n_2}} |v|^{\frac{n_2}{n_1+n_2}} },1\}^{n_1+n_2}.
		\end{split}
	\end{equation*}
	We now demonstrate that for $m_2\geq m_1+3$, we derive the following results:
	\begin{equation*}
		\begin{split}
		&\left|G_{m_1,m_2}\right|\\
		&\lesssim \sum_{0\leq a\leq b_2} \frac{1}{|v|^{n_3+2+b_2-a}}\iint_{\substack{|\xi|\sim2^{m_1}\\|\eta|\sim2^{m_2}}} 
		\left|
		\partial_\eta^{n_3} 
		\left(
		(-i\eta)^a \partial_\eta^2 e^{-M(\xi,\eta)}
		\chi\left(\frac{|\eta|}{2^{m_2}}\right)
		\right)
		(-i\xi)^{b_1}
		\chi\left(\frac{|\xi|}{2^{m_1}}\right)
		\right| d\xi d\eta\\
		&\lesssim \sum_{0\leq a\leq b_2} \frac{1}{|v|^{n_3+2+b_2-a}}\iint_{\substack{|\xi|\sim2^{m_1}\\|\eta|\sim2^{m_2}}}  \sum_{0\leq l+r\leq n_3}
		\left| \partial_\eta^{r+2} e^{-M(\xi,\eta)}
		\right|
		\left|\partial_\eta^l (-i\eta)^a
		\right|
		\left|\partial_\eta^{n_3-r-l} \chi\left(\frac{|\eta|}{2^{m_2}}\right)
		\right|
		|\xi|^{b_1} d\xi d\eta\\
		&\lesssim \sum_{\substack{0\leq a\leq b_2\\0\leq l+r\leq n_3}} \frac{1}{|v|^{n_3+2+b_2-a}}\iint_{\substack{|\xi|\sim2^{m_1}\\|\eta|\sim2^{m_2}}}  \left(|\eta|^{2s-r-2}+\frac{|\xi|^{2s+1}}{|\eta|^{r+3}}\right)|\xi|^{b_1} |\eta|^{a-l}  2^{-m_2(n_3-r-l)} d\xi d\eta\\
		&\lesssim \sum_{0\leq a\leq b_2} \frac{1}{|v|^{n_3+2+b_2-a}}2^{m_1(1+b_1)+m_2(2s+a-n_3-1)}. 
		\end{split}
	\end{equation*}
	Specially, by choosing $n_3=0$, we get
	\begin{equation*}
		\left|G_{m_1,m_2}\right| \lesssim \sum_{0\leq a\leq b_2} \frac{1}{|v|^{2+b_2-a}} 2^{m_1(1+b_1)+m_2(2s+a-1)}. 
	\end{equation*}
	Hence, by synthesizing the aforementioned estimates, we deduce
	\begin{equation*}
		\begin{split}
		\left|G_{m_1,m_2}\right| &\lesssim \sum_{0\leq a\leq b_2} \frac{1}{|v|^{2+b_2-a}}2^{m_1(1+b_1)+m_2(2s+a-1)} \min\{\frac{1}{|v|^{n_3}}  2^{-m_2n_3},1\}\\
		&\lesssim\frac{2^{m_1(1+b_1)+m_2(2s-1)}}{|v|^{2+b_2}} (1+(|v|2^{m_2})^{b_2}) \min\{\frac{2^{-m_2}}{|v|},1\}^{n_3}.
		\end{split}
	\end{equation*}
	To prove the remaining bound $m_2\leq m_1-3$, we take the following stvarepsilons
	\begin{equation*}
		\begin{split}
			&\left|G_{m_1,m_2}\right|\\
			&\lesssim \sum_{0\leq a\leq b_2} \frac{1}{|x|^{n_4+b_1}|v|^{n_5+2+b_2-a}}\iint_{\substack{|\xi|\sim2^{m_1}\\|\eta|\sim2^{m_2}}} 
			\left|
			\partial_\xi^{n_4+b_1} \partial_\eta^{n_5} 
			\left(
			(-i\xi)^{b_1} (-i\eta)^a \partial_\eta^2 e^{-M(\xi,\eta)}
			\chi\left(\frac{|\xi|}{2^{m_1}}\right)\chi\left(\frac{|\eta|}{2^{m_2}}\right)
			\right)
			\right| d\xi d\eta\\
			&\lesssim \sum_{0\leq a\leq b_2} \frac{1}{|x|^{n_4+b_1}|v|^{n_5+2+b_2-a}}\iint_{\substack{|\xi|\sim2^{m_1}\\|\eta|\sim2^{m_2}}}  \sum_{\substack{0\leq j+k\leq n_4+b_1\\0\leq l+r\leq n_5}}
			\left|\partial_\xi^k \partial_\eta^{r+2} e^{-M(\xi,\eta)}
			\right|
			\left|\partial_\xi^j (-i\xi)^{b_1}
			\right|
			\left|\partial_\eta^l (-i\eta)^a
			\right|\\
			&\qquad\qquad\qquad\qquad\qquad\qquad\qquad\qquad\qquad\left|\partial_\xi^{n_4+b_1-j-k} \chi\left(\frac{|\xi|}{2^{m_1}}\right)
			\right|
			\left|\partial_\eta^{n_5-r-l} \chi\left(\frac{|\eta|}{2^{m_2}}\right)
			\right| d\xi d\eta\\
			&\lesssim \sum_{\substack{0\leq a\leq b_2\\0\leq j+k\leq n_4+b_1\\0\leq l+r\leq n_5}} \frac{1}{|x|^{n_4+b_1}|v|^{n_5+2+b_2-a}}\iint_{\substack{|\xi|\sim2^{m_1}\\|\eta|\sim2^{m_2}}}  (|\xi|^{2s-k-r-2}+\frac{|\eta|^{2s-1-r}}{|\xi|^{k+1}})|\xi|^{b_1-j} |\eta|^{a-l}\\ &\qquad\qquad\qquad\qquad\qquad\qquad\qquad\qquad\qquad\qquad\quad2^{-m_1(n_4+b_1-j-k)} 2^{-m_2(n_5-r-l)} d\xi d\eta\\
			&\lesssim \sum_{0\leq a\leq b_2} \frac{1}{|x|^{n_4+b_1}|v|^{n_5+2+b_2-a}}\left( 2^{m_1(2s-r-n_4-1)}2^{m_2(a+1-n_5+r)} 
			+2^{-m_1n_4} 2^{m_2(a+2s-n_5)}\right)\\
			&\lesssim \sum_{0\leq a\leq b_2} \frac{1}{|x|^{n_4+b_1}|v|^{n_5+2+b_2-a}} 2^{-m_1n_4-m_2n_5}
			2^{m_2(a+2s)}.
		\end{split}
	\end{equation*}
	Specially, choosing $n_4=0,n_5=0$, we can obtain
	\begin{equation*}
		\left|G_{m_1,m_2}\right| \lesssim \sum_{0\leq a\leq b_2} \frac{1}{|x|^{b_1}|v|^{2+b_2-a}} 
		2^{m_2(a+2s)}.
	\end{equation*}
	Then there holds
	\begin{equation*}
		\begin{split}
		\left|G_{m_1,m_2}\right| &\lesssim \sum_{0\leq a\leq b_2} \frac{1}{|x|^{b_1}|v|^{2+b_2-a}} 
		2^{m_2(a+2s)}\min\left\{\frac  {2^{-m_1n_4-m_2n_5}}{|x|^{n_4}|v|^{n_5}},1\right\}\\
		&\lesssim \frac{2^{m_22s}}{|x|^{b_1}|v|^{2+b_2}}
		(1+(|v|2^{m_2})^{b_2}) \min\{\frac{2^{-m_1n_4-m_2n_5}}{|x|^{n_4}|v|^{n_5}},1\}.
		\end{split}
	\end{equation*}
	\textbf{case $2: \frac{1}{2}<s\leq 1$}.
	The reasoning for $\frac{1}{2}<s<1$ is completely analogous, it can be readily verified
	\begin{equation*}
		\begin{split}
		&\left|H_{m_1,m_2}\right| 
		\lesssim \frac{2^{m_1(2s+b_1-1)}}{|v|^{3+b_2}} \left(1+(|v|2^{m_1})^{b_2}\right)\min\left\{\frac{2^{-m_1}}{|x|^{\frac{n_6}{n_6+n_7}}|v|^{\frac{n_7}{n_6+n_7}}}  ,1\right\}^{n_6+n_7},~|m_1-m_2|\leq2,\\
		&\left|H_{m_1,m_2}\right| \lesssim  \frac{2^{m_1(1+b_1)+m_2(2s-2)}}{|v|^{3+b_2}} \left(1+(|v|2^{m_2})^{b_2}\right)
		\min\left\{\frac{2^{-m_2}}{|v|},1\right\}^{n_8},~  m_2\geq m_1+3,\\
		&\left|H_{m_1,m_2}\right| \lesssim  \frac{2^{m_2(2s-1)}}{|x|^{b_1}|v|^{3+b_2}}
		\left(1+(|v|2^{m_2})^{b_2}\right) 
		\min\left\{\frac  {2^{-m_1n_9-m_2n_{10}}}{|x|^{n_9}|v|^{n_{10}}},1\right\},~m_2\leq m_1-3.
		\end{split}
	\end{equation*}
	We omit the details here.\\
	As a result, it comes out that for all $0<s<1$ and $m_1,m_2 \in \mathbb{Z}$,
	\begin{equation*}
		\begin{split}
		\widetilde{K}_{m_1,m_2} 
		&\lesssim \frac{2^{m_1(2s^*+b_1)}}{|v|^{\boxplus+b_2}} \left(1+(|v|2^{m_1})^{b_2}\right)\min\left\{\frac{2^{-m_1}}{|x|^{\frac{n_1}{n_1+n_2}}|v|^{\frac{n_2}{n_1+n_2}}}  ,1\right\}^{n_1+n_2}, ~|m_1-m_2|\leq2,\\
		\widetilde{K}_{m_1,m_2} 
		&\lesssim  \frac{2^{m_1(1+b_1)+m_2(2s^*-1)}}{|v|^{\boxplus+b_2}} \left(1+(|v|2^{m_2})^{b_2}\right)
		\min\left\{\frac{2^{-m_2}}{|v|},1\right\}^{n_3}, ~  m_2\geq m_1+3,\\
		\widetilde{K}_{m_1,m_2} 
		&\lesssim  \frac{2^{m_22s^*}}{|x|^{b_1}|v|^{\boxplus+b_2}}
		\left(1+(|v|2^{m_2})^{b_2}\right) 
		\min\left\{\frac  {2^{-m_1n_4-m_2n_5}}{|x|^{n_4}|v|^{n_5}},1\right\}, ~m_2\leq m_1-3.
		\end{split}
	\end{equation*}
	Hence the statements in Lemma \ref{estimate K_m1m2} are proved.
\end{proof}	

\section{Proof of main Theorem}
With the above estimates, we are in a position to complete the proof of Theorem \ref{estimate K}.
\begin{proof}[Proof of Theorem \ref{estimate K}]
	To derive the results, we partition our proof into three distinct cases.\\
	\textbf{case $1: |x|,|v|\leq 1 $}. By applying Lemma \ref{lemma e^M} and performing straightforward computations, it is evident that
	\begin{equation*}
		\begin{split}
		\left|\partial_x^{b_1} \partial_v^{b_2}{K}(1,x,v)\right|
		&=\left|\iint (i\xi)^{b_1} (i\eta)^{b_2} \exp\left(-M(\xi,\eta)-i\xi \cdot x-i\eta \cdot v\right) d\xi d\eta\right|\\
		&\lesssim \iint |\xi|^{b_1} |\eta|^{b_2} \left|\exp\left(-M(\xi,\eta\right)
		\right| d\xi d\eta\\
		&= \iint |\xi|^{b_1} |\eta|^{b_2} e^{(-(|\xi|+|\eta|)^{2s})} d\xi d\eta\lesssim 1.
		\end{split}
	\end{equation*}
	\textbf{case $2: |v|\geq|x|$}. 
	In this situation, it is necessary to divide into two cases based on the range of $s$. Initially, we make an observation regarding $0<s\leq \frac{1}{2}$.\\
	\textbf{case $2.1: 0<s\leq \frac{1}{2}$}.
	We apply the integration by parts procedure two times with respect to $\eta$, resulting in
	\begin{equation*}
		K(1,x,v)=\frac{1}{(-iv)^2} \iint \partial_\eta^2 e^{-M(\xi,\eta)} e^{-i\xi\cdot x-i\eta\cdot v}  d\xi d\eta,
	\end{equation*}	
	then we can derive the derivatives with respect to x and v 
	\begin{equation*}
		\partial_x^{b_1} \partial_v^{b_2} K(1,x,v)
		= \sum_{m_1,m_2}\sum_{a=0}^{b_2}
		\frac{C_{a,b_2}}{v^{2+b_2-a}} \iint (-i\xi)^{b_1} (-i\eta)^a \partial_\eta^2 e^{-M(\xi,\eta)} e^{-i\xi\cdot x-i\eta\cdot v}  d\xi d\eta
		=:\sum_{m_1,m_2} G_{m_1,m_2}.
	\end{equation*}
	Thanks to \eqref{h}, one has
	\begin{equation*}
		\begin{split}
		&\sum_{|m_1-m_2|\leq 2} \left|G_{m_1,m_2}\right|\\
		&\lesssim \sum_{0\leq a\leq b_2}
		\sum_{2^{-m_1(n_1+n_2)}\leq|x|^{n_1}|v|^{n_2}} 
		\frac{2^{m_1(2s+b_1+a-n_1-n_2)}}{|x|^{n_1}|v|^{n_2+2+b_2-a}}
		+\sum_{0\leq a\leq b_2}
		\sum_{2^{-m_1(n_1+n_2)}\geq|x|^{n_1}|v|^{n_2}} 
		\frac{2^{m_1(2s+b_1+a)}}{|v|^{2+b_2-a}}\\
		&\lesssim \sum_{0\leq a\leq b_2} \frac{1}{|x|^{n_1}|v|^{n_2+2+b_2-a}}\left(\frac{1}{|x|^{n_1}|v|^{n_2}}\right)^{\frac{2s+b_1+a-n_1-n_2}{n_1+n_2}}+\sum_{0\leq a\leq b_2} \frac{1}{|v|^{2+b_2-a}}\left(\frac{1}{|x|^{n_1}|v|^{n_2}}\right)^{\frac{2s+b_1+a}{n_1+n_2}}\\
		&\lesssim \frac{1}{|v|^{2+2s+b_1+b_2}} \sum_{0\leq a\leq b_2} \left(\frac{|v|}{|x|}\right)^{\frac{n_1}{n_1+n_2}(2s+b_1+a)}\\
		&\lesssim \frac{1}{|v|^{2+2s+b_1+b_2-\frac{n_1}{n_1+n_2}(2s+b_1+b_2)}} \frac{1}{|x|^{\frac{n_1}{n_1+n_2}(2s+b_1+b_2)}}\\
		&\sim\frac{1}{|v|^{2+2s+b_1+b_2-\varepsilon}} \frac{1}{|x|^\varepsilon},
		\end{split}
	\end{equation*}
	the claim follows by choosing $n_1+n_2>2s+b_1+b_2$ appropriately.\\
	Moreover, by virtue of \eqref{j}, one can achieve
	\begin{equation*}
		\begin{split}
		\sum_{m_2\geq m_1+3} \left|G_{m_1,m_2}\right|
		&\lesssim \sum_{0\leq a\leq b_2}
		\sum_{m_2=-\infty}^{+\infty} \sum_{m_1=-\infty}^{m_2-3}
		\frac{2^{m_1(1+b_1)} 2^{m_2(2s+a-1)}}{|v|^{2+b_2-a}}  \min\{\frac{2^{-m_2n_3}}{|v|^{n_3}} ,1\}\\
		&\lesssim \sum_{0\leq a\leq b_2} \sum_{2^{-m_2n_3}\leq|v|^{n_3}} \frac{2^{m_2(2s+a+b_1-n_3)}}{|v|^{n_3+2+b_2-a}} +\sum_{0\leq a\leq b_2} \sum_{2^{-m_2n_3}\geq|v|^{n_3}} \frac{2^{m_2(2s+a+b_1)}}{|v|^{2+b_2-a}} \\
		&\lesssim \sum_{0\leq a\leq b_2}  \frac{1}{|v|^{2+b_2-a+n_3}} \left(\frac{1}{|v|^{n_3}}\right)^{\frac{2s+a+b_1-n_3}{n_3}} 
		+\sum_{0\leq a\leq b_2}  \frac{1}{|v|^{2+b_2-a}} \left(\frac{1}{|v|^{n_3}}\right)^{\frac{2s+a+b_1}{n_3}} \\
		&\lesssim  \sum_{0\leq a\leq b_2} \frac{1}{|v|^{2+b_2-a}} \left(\frac{1}{|v|^{n_3}}\right)^{\frac{2s+a+b_1}{n_3}} \\
		&\sim\frac{1}{|v|^{2+2s+b_1+b_2}},
		\end{split}
	\end{equation*}
	the claim follows by choosing $n_3>2s+b_1+b_2$ appropriately.\\
	To simplify the remaining case: $m_2\leq m_1-3$, we introduce the notation:
	\begin{equation*}
		\begin{split}
		\frac{1}{n_4+n_5}\left(\log_2\left(\frac{1}{|x|^{n_4}|v|^{n_5}}\right)+3n_5\right)
		=:A,\\
		\frac{1}{n_5}\left(\log_2\left(\frac{1}{|x|^{n_4}|v|^{n_5}}\right)-m_1n_4\right) 
		=:B,\\
		\frac{1}{n_4+n_5}\left(\log_2\left(\frac{1}{|x|^{n_4}|v|^{n_5}}\right)-3n_4\right)
		=:D,\\
		\frac{1}{n_4}\left(\log_2\left(\frac{1}{|x|^{n_4}|v|^{n_5}}\right)-m_2n_5\right)
		=:E.
		\end{split}
	\end{equation*}
	On account of \eqref{k}, one can yield 
	\begin{equation}
		\begin{split}
		&\sum_{m_2\leq m_1-3} \left|G_{m_1,m_2}\right|\\
		&\quad\lesssim \sum_{m_2\leq m_1-3}
		\sum_{0\leq a\leq b_2}
		\frac{1}{|x|^{n_4+b_1}|v|^{n_5+2+b_2-a}}\mathbf{1}_
		{
			2^{-m_1n_4-m_2n_5}	\leq 
			{|x|^{n_4}|v|^{n_5}}
		}
		2^{-m_1n_4} 2^{m_2(2s+a-n_5)} \\
		&\quad\quad+\sum_{m_2\leq m_1-3}\sum_{0\leq a\leq b_2}
		\frac{1}{|x|^{b_1}|v|^{2+b_2-a}}\mathbf{1}_
		{
			2^{-m_1n_4-m_2n_5}	\geq 
			{|x|^{n_4}|v|^{n_5}}
		}
		2^{m_2(2s+a)} \label{I12}\\
		&\quad=:I^1+I^2.
		\end{split}
	\end{equation}
	Regarding the first sum on the RHS of \eqref{I12}, it is obvious that
	\begin{equation*}
		\begin{split}
		I^1
		&\lesssim \sum_{0\leq a\leq b_2} \sum_{m_1\geq A}
		\sum_{m_2=B}^{m_1-3}
		\frac{2^{m_2(a+2s-n_5)} 2^{-m_1n_4}}{|x|^{n_4+b_1}|v|^{2+b_2+n_5-a}}
		\lesssim \sum_{0\leq a\leq b_2} \sum_{m_1\geq A}
		\frac{2^{B(a+2s-n_5)} 2^{-m_1n_4}}{|x|^{n_4+b_1}|v|^{2+b_2+n_5-a}}
		\\
		&\lesssim \sum_{0\leq a\leq b_2}  \frac{1}{|x|^{n_4+b_1}|v|^{2+b_2+n_5-a}} \left(\frac{1}{|x|^{n_4}|v|^{n_5}}\right)^{\frac{2s+a-n_5}{n_5}}  2^{A\left(-\frac{n_4}{n_5}(a+2s)\right)}\\
		&\lesssim\frac{1}{|v|^{2+2s+b_2-\frac{n_4(b_2+2s)}{n_4+n_5}}} \frac{1}{|x|^{b_1+\frac{n_4(b_2+2s)}{n_4+n_5}}}\\
		&\sim\frac{1}{|v|^{2+2s+b_2-\varepsilon}} \frac{1}{|x|^{b_1+\varepsilon}},
		\end{split}
	\end{equation*}
	the claim follows by choosing $n_5>2s+b_2$ appropriately.\\
	As for the second sum on the RHS of \eqref{I12}, we simply write that
	\begin{equation*}
		\begin{split}
		I^2
		&\lesssim \sum_{0\leq a\leq b_2}
		\sum_{m_2\leq D}
		\sum_{m_1=m_2+3}^{E}
		\frac{2^{m_2(a+2s)}}{|x|^{b_1}|v|^{2+b_2-a}}
		\\
		&\lesssim \sum_{0\leq a\leq b_2}
		\frac{1}{|x|^{b_1}|v|^{2+b_2-a}}
		\sum_{m_2\leq D}
		(-m_1)2^{m_2(a+2s)}\\
		&\lesssim \sum_{0\leq a\leq b_2}
		\frac{1}{|x|^{b_1}|v|^{2+b_2-a}}
		2^{D(a+2s)}\\
		&\lesssim \frac{1}{|v|^{2+2s+b_2-\frac{n_4(b_2+2s)}{n_4+n_5}}} \frac{1}{|x|^{b_1+\frac{n_4(b_2+2s)}{n_4+n_5}}}\\
		&\lesssim \frac{1}{|v|^{2+2s+b_2-\varepsilon}} \frac{1}{|x|^{b_1+\varepsilon}}.
		\end{split}
	\end{equation*}
	Therefore, we can arrive at 
	\begin{equation*}
		\sum_{m_2\leq m_1-3} \left|G_{m_1,m_2}\right| \lesssim \frac{1}{|v|^{2+2s+b_2-\varepsilon}} \frac{1}{|x|^{b_1+\varepsilon}}.
	\end{equation*}
	
	Next, our attention turns to $\frac{1}{2}<s<1$. Given its similarity to case $2.1$, we just summarize key findings here.\\
	\textbf{case $2.2: \frac{1}{2}<s<1 $}. We apply the integration by parts procedure three times with respect to $\eta$, resulting in
	\begin{equation*}
		K(1,x,v)=\frac{1}{(-iv)^3} \iint \partial_\eta^3 e^{-M(\xi,\eta)} e^{-i\xi\cdot x-i\eta\cdot v}  d\xi d\eta,
	\end{equation*}	
	which implies that
	\begin{equation*}
		\partial_x^{b_1} \partial_v^{b_2} K(1,x,v)
		=\sum_{a=0}^{b_2} \frac{C_{a,b_2}}{v^{3+b_2-a}} \iint (-i\xi)^{b_1} (-i\eta)^a \partial_\eta^3 e^{-M(\xi,\eta)} e^{-i\xi\cdot x-i\eta\cdot v} d\xi d\eta
		=:\sum_{m_1,m_2} H_{m_1,m_2}.
	\end{equation*}
	Thus, we can acquire the following results through analogous computation
	\begin{equation*}
		\begin{split}
		&\sum_{|m_1-m_2|\leq 2} \left|H_{m_1,m_2}\right|
		\lesssim
		\sum_{0\leq a \leq b_2} \frac{1}{|v|^{2+2s+b_1+b_2-\frac{n_6}{n_6+n_7}(2s+b_1+a-1)}} \frac{1}{|x|^{\frac{n_6}{n_6+n_7}(2s+b_1+a-1)}}
		\sim\frac{1}{|v|^{2+2s+b_1+b_2-\varepsilon}} \frac{1}{|x|^\varepsilon},\\
		&\sum_{m_2\geq m_1+3} \left|H_{m_1,m_2}\right|
		\lesssim  \sum_{0\leq a\leq b_2} \frac{1}{|v|^{3+b_2-a}} \left(\frac{1}{|v|^{n_8}}\right)^{\frac{2s+a+b_1-1}{n_8}} 
		\sim\frac{1}{|v|^{2+2s+b_1+b_2}},\\
		&\sum_{m_2\leq m_1-3} \left|H_{m_1,m_2}\right|
		\lesssim \sum_{0\leq a\leq b_2}	\frac{1}{|v|^{2+2s+b_2-\frac{n_9(a+2s-1)}{n_9+n_{10}}}} \frac{1}{|x|^{b_1+\frac{n_9(a+2s-1)}{n_9+n_{10}}}}
		\sim\frac{1}{|v|^{2+2s+b_2-\varepsilon}} \frac{1}{|x|^{b_1+\varepsilon}},
		\end{split}
	\end{equation*}
	the claim follows by choosing $n_6+n_7>2s+b_1+b_2-1,n_8>2s+b_1+b_2-1,  n_{10}>2s+b_2-1$ appropriately.
	Therefore, we can conclude that
	\begin{equation*}
		\sum_{m_2\leq m_1-3} \left|H_{m_1,m_2}\right|
		\lesssim \frac{1}{|v|^{2+2s+b_2-\varepsilon}} \frac{1}{|x|^{b_1+\varepsilon}}.
	\end{equation*}
	In conclusion, we infer that
	\begin{equation*}
		\sum_{m_1,m_2\in\mathbb{Z}} \widetilde{K}_{m_1,m_2}
		\lesssim \frac{1}{|v|^{2+2s+b_2-\varepsilon}}\frac{1}{|x|^{b_1+\varepsilon}}.
	\end{equation*}
	\begin{remark}
		It can be observed that, due to the range of $s$, in case $2$ it is necessary to perform a varying number of partial integrations.
	\end{remark}
	
	Finally, we briefly mention the case when $|x|\geq|v|$, where the situation seems to be delicate. Further details are omitted here. \\
	\textbf{case $3: |x|\geq|v|$}
	: Similar to case 2, we have
	\begin{equation*}
		\sum_{m_1,m_2\in\mathbb{Z}} \widetilde{K}_{m_1,m_2} \lesssim
		\frac{1}{|x|^{2+2s+b_1-\varepsilon}}\frac{1}{|v|^{b_2+\varepsilon}}.
	\end{equation*}	
	Therefore, combining the aforementioned three cases, for $\forall x,v \in \mathbb{R}$, the following inequality holds:
	\begin{equation*}
		\left|\partial_x^{b_1}\partial_v^{b_2}K(1,x,v)\right|\lesssim\frac{1}{\langle x,v \rangle ^{2+2s-2\varepsilon}\langle x\rangle^{\varepsilon+b_1}\langle v\rangle^{\varepsilon+b_2}},
	\end{equation*}
	which implies that
	\begin{equation*}
		\left|\partial_x^{b_1}\partial_v^{b_2}\mathcal{K}(1,x,v)\right|
		=\left|\partial_x^{b_1}\partial_v^{b_2}K(1,x,-x-v)\right|
		\lesssim\frac{1}{\langle x,x+v \rangle ^{2+2s-2\varepsilon}\langle x\rangle^{\varepsilon+b_1}\langle x+v\rangle^{\varepsilon+b_2}}.
	\end{equation*}
	This proves the Theorem \ref{estimate K}.
\end{proof}

\textbf{Acknowledgements:} The authors are grateful to Professor Quoc-Hung Nguyen, who introduced this project to us and patiently guided, supported, and encouraged us during this work.

\end{document}